\newtheorem{theorem}{Theorem}
\newtheorem{assumption}{Assumption}
\newtheorem{lemma}[theorem]{Lemma}
\newtheorem{proposition}[theorem]{Proposition}
\newtheorem{remark}{Remark}
\newcommand{\xmath}[1]{\ensuremath{#1}\xspace}
\newcommand{\E}{\xmath{{E}}}
\renewcommand{\Pr}{\xmath{{P}}}
\newcommand{\gn}{\xmath{\gamma / \phi_n}}
\newcommand{\fin}{\xmath{\phi _n}}
\newcommand{\ega}{\xmath{e^{\gamma }}}
\newcommand{\Zl}{\xmath{Z_{\textnormal{loc}}}}
\newcommand{\Max}{\xmath{\textnormal{Max}}}
\title{Exact and efficient simulation of tail probabilities of
  heavy-tailed infinite series} \date{}
\author[Hult, Juneja and Murthy]{{\large
   H\MakeLowercase{enrik} H\MakeLowercase{ult}} \hspace{20pt}
 {\large S\MakeLowercase{andeep} J\MakeLowercase{uneja}} \hspace{20pt}
 {\large K\MakeLowercase{arthyek} M\MakeLowercase{urthy}}}
\address{Royal Institute of Technology}
\email{hult@kth.se}
\address{Tata Institute of Fundamental Research, Mumbai}
\email{juneja@tifr.res.in}
\address{Columbia University}
\email{karthyek@gmail.com}
\begin{document}
\maketitle
\begin{abstract}
  We develop an efficient simulation algorithm for computing the tail
  probabilities of the infinite series $S = \sum_{n \geq 1} a_n X_n$ when random variables 
  $X_n$ are heavy-tailed.   As
  $S$ is the sum of infinitely many random variables, any simulation
  algorithm that stops after simulating only fixed, finitely many
  random variables is likely to introduce a bias. We overcome this
  challenge by rewriting the tail probability of interest as a sum of
  a random number of telescoping terms, and subsequently developing
  conditional Monte Carlo based low variance simulation estimators for
  each telescoping term. The resulting algorithm is proved to result
  in estimators that a) have no bias, and b) require only a fixed,
  finite number of replications irrespective of how rare the tail
  probability of interest is. Thus, by combining a traditional
  variance reduction technique such as conditional Monte Carlo with
  more recent use of auxiliary randomization to remove bias in a
  multi-level type representation, we develop an efficient  and unbiased simulation
  algorithm for tail probabilities of $S$.
  These have many applications including in 
 analysis of financial time-series
  and stochastic recurrence equations arising in models in actuarial
  risk and population biology.
\end{abstract}

\noindent

\section{Introduction}
\noindent
Given a sequence of regularly varying random variables
$(X_n: n \geq 1)$ and discount factors $(a_n: n \geq 1),$ the
objective of this paper is to design an algorithm that computes tail
probabilities of linear models of form, 
\[S = \sum_{n \ \geq \ 1} a_n X_n.\]
In addition to arising naturally in the study of linear processes and
stochastic recurrence equations, such infinite series are also used in
risk analysis to model instances where, for example, the surplus of an
insurance firm is invested in a risky asset. See \cite{hult2008,
  doi:10.1080/15326340600649029, doi:10.1080/03461230500361943,
  RES97,10.2307/4140398} and references therein for a review of
stochastic models where the infinite series $S$ is a central object of
interest. 

Since exact computation of $P(S > b)$, for a given positive real
number $b,$ is generally not possible, it is common to resort to Monte
Carlo simulations. However, as the object of interest involves
infinitely many random variables, any simulation algorithm that stops
after generating only finitely many random variables is likely to
introduce a bias. In addition, as the parameter $b$ increases, the
event of interest, $\{ S > b\},$ becomes more rare, thus making the
problem harder to estimate within a limited computational budget. The
objective of this paper is to design a Monte Carlo algorithm that
resolves these difficulties. Precisely, we design a family of
simulation estimators $(Z(b): b > 0)$ for estimating probabilities
$P(S > b )$ such that,
\begin{itemize}
 \setlength\itemsep{0em}
\item[1)] the estimators have no bias,
\item[2)] the variance of the family $Z(b)$ is uniformly bounded. 
\end{itemize}
These two properties, in turn, help in guaranteeing that the output
of the Monte Carlo procedure is within a pre-specified relative
precision after only expending an expected computational effort that is
uniformly bounded in $b.$ In other words, the expected computational
effort remains bounded irrespective of the rarity of  the event.

While the study on bias minimization in Monte Carlo simulations
received a huge boost with the introduction of multi-level simulation
(see \cite{giles2008multilevel}), the prospect of eliminating bias
altogether has become possible with the debiasing techniques
employed in \cite{mcleish2011general,rhee2015unbiased} and
\cite{MJB2014}). As we quickly illustrate in Section
\ref{SEC-SIM-METH-CHAP5}, use of a suitably chosen auxiliary random
variable that determines when the algorithm terminates is at
the heart of these new class of algorithms that eliminate bias. For
our simulation problem, this technique enables us to work on modified
`local' problems only involving random variables $a_i, X_i,$ for $i$
not exceeding a random level $N.$ Then the probability law of this
random level $N$ is chosen carefully in order to combine estimators
for these local problems without bias.

Though the use of a suitably chosen auxiliary random variable may
eliminate bias, it is not sufficient to deal with the fact that the
probabilities $P(S > b)$ are small for large values of $b.$
Consequently, a `naive' simulation algorithm will require as many as
$O(1/P(S > b))$ repeated simulation runs to achieve a desired relative
precision (see \cite{juneja2006rare}). Since this is
computationally expensive, we propose new Monte Carlo estimators that
are effective for simulating rare events of our interest. More
precisely, we devise a family of conditional Monte Carlo estimators
$(\Zl(n,b): n \geq 1),$ for a given threshold $b,$ to solve the family
of local problems indexed by $n.$ Here, recall that the $n$-th local
problem is such that it involves only random variables
$(a_i, X_i: i = 1, \ldots, n),$ and hence can be solved in finite
time. Then, assuming that the random variables $X_i$ are regularly
varying, we show that the variance of the local estimators $\Zl(n,b)$ are
sufficiently low, uniformly in $n$ and $b.$

By carefully choosing the law of the auxiliary random variable $N,$ we
combine these local estimators $\Zl(n,b)$ to develop an unbiased
estimator for $P(S > b)$ with bounded coefficient of variation
(relative error). As we show, this ensures that the computational
complexity does not scale with $b,$ even if the probability $P(S > b)$
becomes rare.  In addition, we verify that the expected termination
time of the simulation algorithm is finite, thereby guaranteeing that
the estimation can performed with a computational effort that is
uniformly bounded in $b.$ 

As estimation of tail probabilities of heavy-tailed sums has been
known to be more challenging than their light-tailed counterparts (see
\cite{ABH00,MR2311409}), simulation algorithms using a variety of
techniques (such as conditional Monte Carlo (see \cite{AK06}),
importance sampling (see
\cite{ABH00,Juneja:2002:SHT:566392.566394,DBLP:journals/questa/Juneja07,Dupuis:2007:ISS:1243991.1243995,MR2434174,MR2488534,Blanchet20122994,MJB2014},
splitting \cite{Blanchet:2013:ESR:2675983.2676077}, Markov chain Monte
Carlo \cite{gudmundsson2014}, and cross-entropy method \cite{BlSh13}
have been developed after intense research over previous decade. In
particular, \cite{Blanchet:2013:RSS:2556945.2517451} develops an
importance sampling algorithm for simulation of tail probabilities of
the stochastic recurrence equations of the form
\begin{align*}
  X_{n+1} = A_{n + 1}X_n + B_{n+1}, \quad X_0 = 0,
\end{align*}
for large values of $n$. As noted by them, this has  applications in a variety of
settings ranging from financial time series, actuarial risk and
population biology (see
\cite{Kesten1973,goldie1991,roitershtein2007,Basrak200295,Lewontin01041969,tuljapurkar1990delayed}
and references therein). As $n \rightarrow \infty,$ the distribution
of $X_n$ corresponds to the stationary distribution of the Markov
chain modeled by $X_n,$ and can be representated as an  infinite series (see \cite{hult2008}). While importance
sampling remains the most studied technique for simulation of such
tail probabilities, conditional Monte Carlo estimators devised by
Asmussen and Kroese \cite{AK06} have been shown to offer superior
numerical accuracy (see Section 3.5 in \cite{Murthy_Dissertation}). Leveraging this, we design intuitive, easy-to-use Asmussen-Kroese type
conditional Monte Carlo estimators to solve the local problems
mentioned earlier\footnote{The proposed estimators and their variance analysis
comprise Chapter 5 in the PhD Dissertation \cite{Murthy_Dissertation}
of one of the authors}. As the sampling techniques involved for
simulating rare events in light-tailed sums are drastically different
(see, for example, \cite{MR1053850,juneja2006rare,MR2743897}), we note
that a similar study for estimation of tail probabilities of
light-tailed infinite sums as an interesting future research
direction.

The paper is organized as follows: After describing the problem of
interest precisely in Section \ref{Sec-Notn-Prob-St}, we develop the
simulation methodology in Section
\ref{SEC-SIM-METH-CHAP5}. 
A detailed variance analysis that characterizes the
computational complexity of the family of local estimators and the
overall estimators introduced in Section \ref{SEC-SIM-METH-CHAP5} is
presented in Section \ref{SEC-VAR-ANAL-CHAP5}. 
A numerical
example that reaffirms the theoretical efficiency results of the paper
is presented in Section \ref{SEC-NUM-EG-CHAP3}. Technical proofs that
are not central to the variance analysis in Section
\ref{SEC-VAR-ANAL-CHAP5} are presented in the appendix.

\section{Notations and problem statement}
\label{Sec-Notn-Prob-St}
\noindent
To precisely introduce the problem, let $X$ be a zero mean random
variable satisfying the following condition:
\begin{assumption}
  The distribution function of $X,$ denoted by $F(\cdot),$ is such that
  the tail probabilities $\bar{F}(x) := 1 - F(x) = x^{-\alpha} L(x)$
  for some slowly varying function $L(\cdot)$ and $\alpha > 2.$
  \label{REG-VAR-TAIL-ASSUMP} 
\end{assumption}
\noindent Here, the slowly varying function $L(\cdot)$ stands for any
function that satisfies $L(tx)/L(x) \rightarrow 1,$ for every $t > 0,$
as $x \rightarrow \infty.$ When $L(\cdot) = c$ for some positive
constant $c,$ we obtain the special case of Pareto (or) power-law
distributions. Other common examples of slowly varying functions
include logarithmically decaying/growing functions such as
$\log x, \log \log x, 1/ \log x,$ etc. The following
property, 
commonly referred as Potter's bounds, confirms that
regularly varying tail probabilities $\bar{F}(\cdot)$ are essentially
polynomially decaying: there exists a $t_\delta > 0$ such that for all
$t$ and $v$ satisfying $t \geq t_\delta$ and $vt \geq t_\delta$,
\begin{equation}
  (1-\delta) \min \{v^{-\alpha + \delta}, v^{-\alpha-\delta}\} \leq \frac{\bar{F}(vx)}{\bar{F}(x)}
  \leq (1+\delta)\max \{v^{-\alpha + \delta}, v^{-\alpha-\delta}\}
  \label{LONG-TAIL-EXT}.
\end{equation}
See, for example, Chapter VIII of \cite{feller1971introduction} or
Chapter 1 of \cite{MR2424161}, for a proof of \eqref{LONG-TAIL-EXT},
and other important properties of regularly varying distributions.

Let $(X_n: n \geq 1)$ be a sequence of i.i.d. copies of $X.$ Our aim
is to efficiently estimate the tail probabilities of
\[S := \sum_n a_n X_n,\] where $(a_n : n \geq 1)$ satisfies the
following condition: \begin{assumption} The sequence $(a_n: n \geq 1)$
  is such that $a_n$ lies in the interval $(0,1)$ for every $n$ and
  $\sum_n na_n < \infty.$
\label{COEFF-ASSUMP-1}
\end{assumption}
The random variable $S$ is proper because $\sum_n a_n^2 < \infty$
(follows from Kolmogorov's three-series theorem).  The assumptions
that $X$ has zero mean and $a_n < 1$ have been made just for the ease
of exposition. If $X$ has non-zero mean or if $a_n > 1$ for any $n,$
then the corresponding problem of estimating $ \Pr \{S > b\}$ can be
translated to a problem instance satisfying Assumptions
\ref{REG-VAR-TAIL-ASSUMP} and \ref{COEFF-ASSUMP-1} by letting
$\tilde{a}_n := a_n/\sup_n a_n$ and by instead simulating the right
hand side of the equation below:
\[ \Pr \left\{ \sum_n a_n X_n > b \right\} = \Pr \left\{ \sum_n
  \tilde{a}_n (X_n - \E X) > \frac{b - \left( \sum_n a_n \right) \E
    X}{\sup_n a_n} \right\}.\]
Here note that $\sup_n a_n$ exists because we require
$\sum_n a_n < \infty.$ 




\section{Simulation Methodology}
\label{SEC-SIM-METH-CHAP5}
Given $b > 0,$ we aim to estimate   $\Pr \{ S > b \}$ via simulation.  If $S$ is a sum of, say, for
example, $k$ i.i.d. random variables $X_1,\ldots,X_k$, then one can
simply generate an i.i.d. realization of $X_1,\ldots,X_k$ and check
whether their sum is larger than $b$ or not. However, the countably
infinite number of random variables involved in the definition of $S$
makes the task of obtaining a sample of $S$ via its increments, at
least at a preliminary look, appear computationally infeasible. To
overcome this difficulty, we introduce an auxiliary random variable
$N$ and re-express the probability $\Pr\{ S > b \}$ below in
\eqref{RE-EXP-II} in a form that gives computational tractability: Let
\[ S_0 := 0 \text{ and } S_n := \sum_{i=1}^n a_i X_i \text{ for } n
\geq 1. \] Further, let $p_n := \Pr\{ N =n\}$ be positive for every $n
\geq 1.$ Then,
\begin{align}
  \Pr \{ S > b \} &= \lim_n \Pr \{ S_n > b\} \nonumber\\
  &= \sum_{n \geq 1} p_n \frac{\Pr \left\{ S_n > b \right\} - \Pr
    \left\{ S_{n-1} > b \right\}}{p_n} \nonumber\\
  &= \E \left[ \frac{\Pr \left\{ S_{_N} > b \ |\ N \right\} - \Pr
      \left\{ S_{_{N-1}} > b \ |\ N \right\} }{p_{_N}} \right]
  \label{RE-EXP-II}
\end{align}
In Section \ref{SEC-LOC-EST}, we aim to develop unbiased estimators
$(\Zl(n,b): n \geq 1, b> 0)$ satisfying the following desirable
properties:
\begin{itemize}
\item[(1)] The expectation of $\Zl(n,b)$ is $\Pr\{S_n > b\} - \Pr \{
  S_{n-1} > b\}$ for every $n$ and $b.$
\item[(2)] The computational effort required to generate a realization
  of $\Zl(n,b)$ is bounded from above by $C n,$ for some constant $C >
  0,$ uniformly for all $b.$
\item[(3)] The estimators $\Zl(n,b)$ have low variance, uniformly in
  $n$ and $b.$
\end{itemize}
Now, in a simulation run, if the realized value of $N$ is $n,$ we
generate an independent realization of estimator $\Zl(n,b)$ and use
\begin{align*}
  Z(b) := \frac{\Zl(N,b)}{p_{_N}}
\end{align*}
as an estimator for $\Pr \{ S > b\}.$ The fact that $Z(b)$ yields
estimates of $\Pr \{ S > b\}$ without any bias follows from
\eqref{RE-EXP-II}. Thus by introducing an auxiliary random variable
$N,$ in every simulation run, we are faced with the task of generating
only finitely many random variables, as opposed to the naive approach
which requires generation of countably infinite random variables. The
random variables $\Zl (n,b)$ which are instrumental in estimating the
tail probabilities of $S$ will be referred hereafter as `local'
estimators.

\subsection{Local estimators}
\label{SEC-LOC-EST}
As mentioned before, in this section, we present estimators for
quantities \[\left(\Pr\{S_{n} > b\} - \Pr\{S_{n-1} > b\}: n \geq
  1\right)\] that have low variance, uniformly in $n,$ as $b \rightarrow
\infty.$ These form building blocks to serve our initial aim of
estimating the tail probabilities of $S.$ It is well-known that the
sum of heavy-tailed random variables attain a large value typically
because one of the increments (and hence the maximum of the
increments) attain a large value. Therefore, we focus our attention on
identifying the maximum of the increments
\[ M_n := \max\{ a_iX_i: 1 \leq i \leq n\}\]
in a manner that is reflective of the way in which the rare event
under consideration happens. For this, we partition the sample space
based on which of the $n$ increments $\{ a_1X_1,\ldots,a_nX_n\}$ is
the maximum. Let $\Max_n$ denote the index of the increment $a_iX_i$
that equals the maximum $M_n.$ In case of many increments having the
same value as the maximum, we take the largest (index) of them to be
$\Max_n.$ That is,
\[ \Max_n := \max\{ \textnormal{argmax}\{a_iX_i: 1 \leq i \leq n\} \}.\] 
See that the quantity $\Pr\{S_{n} > b\} - \Pr\{S_{n-1} > b\}$ can be
alternatively expressed as
\begin{align}
  \label{PARTITION-ON-MAX}
 \Pr\{S_{n} > b\} - \Pr\{S_{n-1} > b\} = p_1(n,b) + p_2(n,b)  
\end{align}
 where
\begin{align*}
  p_1(n,b) &= \Pr \left\{S_{n} > b, \Max_n = n \right\} - \Pr
  \left\{S_{n-1} > b, \Max_n = n\right\}
  \text{ and }\\
  p_2(n,b) &= \Pr \left\{S_{n} > b, \Max_n \neq n \right\} - \Pr
  \left\{S_{n-1} > b, \Max_n \neq n\right\}.
\end{align*}
We develop alternative representations for quantities $p_1(n,b)$ and
$p_2(n,b)$ and use them to separately estimate $p_1(n,b)$ and
$p_2(n,b)$ in the following sections.

\subsubsection{Estimator for $p_1(n,b)$}
Observe that $\Pr \left\{S_{n-1} > b, S_{n} \leq b, \Max_n = n
\right\} = 0$ because whenever $S_n \leq b$ and $S_{n-1} > b,$ it is
necessary that $X_n$ be negative, and in which case $S_n$ also needs
to be negative (since $M_n = a_nX_n$). Therefore,
\begin{align*}
  p_1(n,b) &= \Pr \left\{S_{n} > b, S_{n-1} \leq b, \Max_n = n
  \right\} - \Pr \left\{S_{n-1} > b, S_{n} \leq b, \Max_n = n
  \right\}\\
  &=\Pr \left\{S_{n} > b, S_{n-1} \leq b, \Max_n = n \right\}.
\end{align*}
Further,
\begin{align*}
  \Pr \left \{ S_n > b, \Max_n = n \ \left| \frac{}{}
    \right. X_1,\ldots,X_{n-1}\right\} & = \Pr \left\{ a_n X_n > b -
    S_{n-1}, a_n X_n > M_{n-1} \ \left| \frac{}{}
    \right. X_1,\ldots,X_{n-1}\right\} \nonumber\\
  & = \bar{F}\left( \frac{1}{a_{_n}} \left(\left( b - S_{n-1}\right)
      \vee M_{n-1}\right) \right). 
\end{align*}
Therefore, it is immediate that 
\begin{align*}
  \E \left[ \bar{F}\left( \frac{1}{a_{_n}} \left(\left( b -
          S_{n-1}\right) \vee M_{n-1}\right) \right)
    \mathbb{I}(S_{n-1} \leq b)\right] 
  &= \Pr \left\{ S_n > b, S_{n-1} \leq b, \Max_n = n\right\}.
\end{align*}
If we let
\[ Z_1(n,b) := \bar{F}\left( \frac{1}{a_{_n}} \left(\left( b -
      S_{n-1}\right) \vee M_{n-1}\right) \right) \mathbb{I}(S_{n-1}
\leq b), \] then it follows from the above discussion that
$\E\left[Z_1(n,b)\right]$ equals $p_1(n,b).$ We note this observation
below as Lemma \ref{LEM-UNB-Z1}.
\begin{lemma} 
  For every $n > 1$ and $b > 0,$ $\E \left[ Z_1(n,b) \right] =
  p_1(n,b).$
\label{LEM-UNB-Z1}
\end{lemma}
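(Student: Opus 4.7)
The plan is to formalize the conditional Monte Carlo argument sketched in the paragraphs preceding the statement. I would proceed in three steps: first reduce $p_1(n,b)$ to a single event probability, then condition on $X_1,\ldots,X_{n-1}$ and evaluate that probability via $\bar F(\cdot)$, and finally take the outer expectation to identify the result with $\E\left[Z_1(n,b)\right]$.

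For the first step, I would verify that $\Pr\{S_{n-1} > b,\, S_n \leq b,\, \Max_n = n\} = 0$. On this event $a_n X_n = S_n - S_{n-1} < 0$, and since $\Max_n = n$ forces $M_n = a_n X_n$ (by the tie-breaking convention that $\Max_n$ is the largest argmax), every increment $a_i X_i$ with $i \leq n$ must be strictly negative. Consequently $S_{n-1} = \sum_{i=1}^{n-1} a_i X_i < 0 \leq b$, contradicting $S_{n-1} > b$. This collapses the two terms defining $p_1(n,b)$ to
\[
  p_1(n,b) = \Pr\{S_n > b,\, S_{n-1} \leq b,\, \Max_n = n\}.
\]

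For the second step, I would condition on $\mathcal{F}_{n-1} := \sigma(X_1,\ldots,X_{n-1})$. Given $\mathcal{F}_{n-1}$, both $S_{n-1}$ and $M_{n-1}$ are determined, and the event $\{S_n > b,\, \Max_n = n\}$ coincides with $\{a_n X_n > b - S_{n-1}\} \cap \{a_n X_n > M_{n-1}\}$, i.e.\ with $\{X_n > a_n^{-1}((b - S_{n-1}) \vee M_{n-1})\}$. Because $X_n$ is independent of $\mathcal{F}_{n-1}$ and has distribution $F$, the conditional probability of this event equals $\bar F\left(a_n^{-1}((b-S_{n-1}) \vee M_{n-1})\right)$. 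Multiplying by the $\mathcal{F}_{n-1}$-measurable indicator $\mathbb{I}(S_{n-1} \leq b)$ and applying the tower property then yields $\E\left[Z_1(n,b)\right] = \Pr\{S_n > b,\, S_{n-1} \leq b,\, \Max_n = n\} = p_1(n,b)$.

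There is essentially no obstacle in this argument; the only point that requires care is the zero-probability observation in step one, which hinges on the convention defining $\Max_n$ as the largest argmax to ensure that $\Max_n = n$ actually implies $M_n = a_n X_n$ rather than merely $M_n \geq a_n X_n$.
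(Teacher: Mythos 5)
Your proposal is correct and follows essentially the same route as the paper: reduce $p_1(n,b)$ to the single probability $\Pr\{S_n > b,\, S_{n-1}\leq b,\, \Max_n = n\}$ by the zero-probability observation, then condition on $X_1,\ldots,X_{n-1}$ to obtain $\bar F\bigl(a_n^{-1}((b-S_{n-1})\vee M_{n-1})\bigr)\,\mathbb{I}(S_{n-1}\leq b)$, and close with the tower property. Your justification of the zero-probability step via $S_{n-1}<0\leq b$ is in fact slightly more explicit than the paper's phrasing, but the underlying idea (negativity of $a_nX_n$ together with $\Max_n=n$ forces all increments negative) is the same.
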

In a simulation run, one can generate samples of $X_1,\ldots,X_{n-1}$
simply from the distribution $F(\cdot)$ and plug it in the expression
of $Z_1(n,b)$ to arrive at an unbiased estimator for $p_1(n,b).$ Since
$Z_1(n,b)$ is just the probability that the event of interest
$\{ S_n > b, S_{n-1} \leq b, \Max_n = n\}$ happens conditional on the
observed values of $X_1,\ldots,X_{n-1},$ $Z_1(n,b)$ is said to belong
to a family of estimators called conditional Monte Carlo estimators
(see, for example, \cite{MR2331321}). Estimators of the form
$Z_1(n,b),$ also referred to as Asmussen-Kroese estimators, are shown
to be extremely effective in the simulation of tail probabilities of
sums of fixed number of heavy-tailed random variables in \cite{AK06}.

\subsubsection{Estimator for $p_2(n,b)$}
Similar to $p_1(n,b),$ one can develop conditional Monte Carlo
estimators for the simulation of $p_2(n,b)$ as well. To accomplish
this, we need  more notation: For any $j \leq n,$ let
\begin{align*}
  S_n^{(-j)} := \sum_{i=1, i \neq j}^n a_i X_i \text{ and }
  M_n^{(-j)} := \max_{i \leq n, i \neq j} a_i X_i.
\end{align*}
Further, for any $n > 1,$ let $(q(j,n): 0 < j < n)$ be a probability
mass function that assigns positive probability to every integer in
$\{1,\ldots,n-1\}.$ Let $J_n$ be an auxiliary random variable which
takes values in $\{1,\ldots,n-1\}$ such that $\Pr \{ J_n = j \} =
q(j,n).$ Aided with this notation, define the estimator for $p_2(n,b)$
as
\begin{align*}
  Z_2(n,b) &:= \frac{Z_{2,1}(n,b) - Z_{2,2}(n,b)}{q(J_n,n)},
\end{align*}
where
\begin{align*}
  Z_{2,1} (n,b) &:= \bar{F} \left( \frac{1}{a_{_{J_n}}} \left( \left(
        b -S_{n}^{(-J_n)} \right) \vee
      M_n^{(-J_n)}\right) \right) \text{ and }\\
  Z_{2,2} (n,b) &:= \bar{F} \left( \frac{1}{a_{_{J_n}}} \left( \left(
        b -S_{n-1}^{(-J_n)} \right) \vee M_n^{(-J_n)} \right) \right).
\end{align*}
Lemma \ref{LEM-UNB-Z2} below verifies that $Z_2(n,b)$ is an unbiased
estimator for $p_2(n,b).$

\begin{lemma} 
  For every $n > 1$ and $b > 0,$ $\E \left[ Z_2(n,b) \right] =
  p_2(n,b).$
  \label{LEM-UNB-Z2}
\end{lemma}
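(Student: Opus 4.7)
The plan is to mirror the argument used for Lemma \ref{LEM-UNB-Z1}, but with an extra layer of randomization supplied by $J_n$. The key observation is that conditional on the $n-1$ random variables $(X_i : i \neq j)$, the quantity $\bar F\bigl(a_j^{-1}((b - S_n^{(-j)}) \vee M_n^{(-j)})\bigr)$ is precisely the conditional probability of the joint event $\{S_n > b, \mathrm{Max}_n = j\}$, because this event requires both $a_j X_j > b - S_n^{(-j)}$ and $a_j X_j > M_n^{(-j)}$; ties occur with probability zero since $\bar F$ is continuous under Assumption \ref{REG-VAR-TAIL-ASSUMP}, so tie-breaking by ``largest index'' has no effect on probabilities. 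I would state this as the first ingredient:
\[
\E\!\left[\bar F\!\left(\tfrac{1}{a_j}\bigl((b-S_n^{(-j)}) \vee M_n^{(-j)}\bigr)\right)\right] = \Pr\{S_n > b, \ \mathrm{Max}_n = j\},
\]
valid for each $j \in \{1,\ldots,n-1\}$, and similarly with $S_n^{(-j)}$ replaced by $S_{n-1}^{(-j)}$ on the left and $S_n$ by $S_{n-1}$ on the right. The second identity uses that $S_{n-1}^{(-j)}$ makes sense for $j \leq n-1$ and that $M_n^{(-j)}$ still includes the term $a_n X_n$, so imposing $a_j X_j > M_n^{(-j)}$ continues to correctly encode $\mathrm{Max}_n = j$ (not merely $\mathrm{Max}_{n-1} = j$).

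Next I would handle the randomization over $J_n$. Since $J_n$ is drawn independently of $(X_1,\ldots,X_n)$ with $\Pr\{J_n = j\} = q(j,n)$, conditioning on $\{J_n = j\}$ and using the identity above gives
\begin{align*}
\E\!\left[\frac{Z_{2,1}(n,b)}{q(J_n,n)}\right]
&= \sum_{j=1}^{n-1} q(j,n)\cdot \frac{1}{q(j,n)}\,\E\!\left[\bar F\!\left(\tfrac{1}{a_j}\bigl((b-S_n^{(-j)}) \vee M_n^{(-j)}\bigr)\right)\right] \\
&= \sum_{j=1}^{n-1}\Pr\{S_n > b, \ \mathrm{Max}_n = j\} \;=\; \Pr\{S_n > b, \ \mathrm{Max}_n \neq n\}.
\end{align*}
An identical calculation for $Z_{2,2}$ yields $\E[Z_{2,2}(n,b)/q(J_n,n)] = \Pr\{S_{n-1} > b, \ \mathrm{Max}_n \neq n\}$. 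Subtracting the two identities gives $\E[Z_2(n,b)] = p_2(n,b)$.

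The only subtlety worth being careful about is the conditional-probability computation in the first step, specifically verifying that the joint event $\{S_n > b,\, \mathrm{Max}_n = j\}$, once conditioned on $(X_i : i \neq j)$, really collapses to the single one-sided tail event $\{a_j X_j > (b-S_n^{(-j)}) \vee M_n^{(-j)}\}$: here one must note that both constituent inequalities are of the same direction in $X_j$ (so they combine into a single threshold via the maximum), and that the ``largest-index'' tie-breaking rule is immaterial because $F$ is continuous, so $\Pr\{a_j X_j = a_i X_i\} = 0$ for every $i \neq j$. The corresponding check for the $S_{n-1}$ term is analogous and uses the same continuity observation. Everything else is a bookkeeping application of the tower property together with the independence of $J_n$ from the $X_i$'s.
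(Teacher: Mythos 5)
Your proposal is correct and mirrors the paper's own proof: both express $Z_{2,1}$ and $Z_{2,2}$ as the conditional probabilities of $\{S_n>b,\ \Max_n=J_n\}$ and $\{S_{n-1}>b,\ \Max_n=J_n\}$ given $(S_n^{(-J_n)}, M_n^{(-J_n)}, J_n)$, and then average over the independently drawn index $J_n$ via the tower property, exactly as you do. One small caveat: the absence of ties (needed so that $\{\Max_n=j\}$ collapses to the strict inequality $a_jX_j>M_n^{(-j)}$ despite the largest-index tie-breaking rule) does not actually follow from Assumption~\ref{REG-VAR-TAIL-ASSUMP} alone, since a regularly varying tail is compatible with atoms elsewhere in the distribution of $X$; it is the later Assumption~\ref{SMOOTHNESS-ASSUMP-II} (absolute continuity of $F$) that justifies it, a point the paper itself leaves implicit at this stage.
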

\begin{proof}
  For any $n$ and $j < n,$ observe that
  \begin{align}
    \Pr \left \{ S_n > b, \Max_n = j \ \left| \frac{}{}
      \right. S_n^{(-j)}, M_n^{(-j)} \right\} & = \Pr \left\{ a_j X_j
      > b - S_n^{(-j)}, a_j X_j > M_n^{(-j)} \ \left| \frac{}{}
      \right. S_n^{(-j)}, M_n^{(-j)}
    \right\} \nonumber\\
    & = \bar{F}\left( \frac{1}{a_{_j}} \left(\left( b -
          S_n^{(-j)}\right) \vee M_n^{(-j)}\right)
    \right), \label{INTER-PROB-TO-EST-1}
  \end{align}
  and similarly,
  \begin{align}
    \Pr \left \{ S_{n-1} > b, \Max_n = j \ \left| \frac{}{}
      \right. S_{n-1}^{(-j)}, M_n^{(-j)} \right\} &= \bar{F}\left(
      \frac{1}{a_{_j}} \left(\left( b - S_{n-1}^{(-j)}\right) \vee
        M_n^{(-j)}\right) \right). \label{INTER-PROB-TO-EST-2}
  \end{align}
  Recall that $J_n$ takes values only in $\{1,\ldots, n-1\}.$
  Therefore, it follows from the definition of $Z_{2,1}(n,b)$ and
  $Z_{2,2}(n,b)$ that
  \begin{align*}
    Z_{2,1}(n,b) &= \Pr \left\{ S_n > b, \Max_n = J_n \ \left|
        \frac{}{} \right. S_n^{(-J_n)}, M_n^{(-J_n)}, J_n \right\}
    \text{ and }\\
    Z_{2,2}(n,b) &= \Pr \left\{ S_{n-1} > b, \Max_n = J_n \ \left|
        \frac{}{} \right. S_n^{(-J_n)}, M_n^{(-J_n)}, J_n \right\}.
  \end{align*}
  Then it is immediate that
  \begin{align*}
    \E \left[ Z_{2,1}(n,b) \ |\ J_n \right] &= \Pr \left \{ S_n > b,
      \Max_n = J_n\ \left| \frac{}{} \right. J_n \right\} \text{ and
    }\\
    \E \left[ Z_{2,2}(n,b) \ |\ J_n \right] &= \Pr \left \{ S_{n-1} >
      b, \Max_n = J_n\ \left| \frac{}{} \right. J_n \right\}.
  \end{align*}
  Since $\Pr \{ J_n = j\} = q(j,n),$ it follows that
  \begin{align}
    \E \left[ \frac{Z_{2,1}(n,b)}{q(J_n,n)} \right] 
    &= \sum_{j=1}^{n-1} \Pr \{ J_n = j\} \E \left[
      \frac{Z_{2,1}(n,b)}{q(J_n,n )} \ \left| \frac{}{} \right.  J_n = j\right]\\
    &=   \sum_{j=1}^{n-1} q(j,n) \frac{\E \left[ Z_{2,1}(n,b) \ | \ 
      J_n = j\right]}{q(j,n)} \nonumber\\
    &= \sum_{j=1}^{n-1} \Pr \left \{ S_n > b, \Max_n = j \right\}
      \nonumber\\
    &= \Pr \left \{ S_n > b, \Max_n \neq n \right\}.
    \label{INTER-Z21}
  \end{align}
  Similarly one can derive that
  \begin{align}
    \E \left[ \frac{Z_{2,2}(n,b)}{q(J_n,n)} \right] &= \Pr \left \{
      S_{n-1} > b, \Max_n \neq n \right\} \label{INTER-Z22}
  \end{align}
  Since $Z_2(n,b) = (Z_{2,1}(n,b)-Z_{2,2}(n,b))/q(J_n,n),$ it is
  immediate from \eqref{INTER-Z21} and \eqref{INTER-Z22} that $\E
  [Z_2(n,b)] = p_2(n,b).$
\end{proof}
\noindent To summarize the simulation procedure, we present Algorithm
\ref{ALGO-SIM-LOC} here, which returns a realization of
\[\Zl(n,b) := Z_1(n,b) + Z_2(n,b)\]
for given values of $n$ and $b.$ It follows from Lemmas
\ref{LEM-UNB-Z1} and \ref{LEM-UNB-Z2} that $\Zl(n,b)$ is indeed an
unbiased estimator for the quantity $\Pr \{S_n > b\} - \Pr \{ S_{n-1}
> b\}.$
\begin{algorithm}[h!]
  \caption{Given $n$ and $b,$ the aim is to efficiently simulate $\Pr
    \{S_n > b\}-\Pr \{ S_{n-1} > b\}$ }
    \begin{algorithmic}
      \\
      \Procedure{LocalSimulation}{$n,b$} 
 
      \State Let $Z_1(n,b) = \textsc{Estimator1}(n,b) \text{ and
      }Z_2(n,b) = \textsc{Estimator2}(n,b)$

      
      \State Return $\Zl(n,b) = Z_1 (n,b) + Z_2 (n,b)$\\

      \EndProcedure

      \Procedure{Estimator1}{$n,b$} \State Initialize $Z_1(n,b) = 0$
      \State Simulate a realization of $(X_i: 1 \leq i \leq n-1)$
      independently from the distribution $F(\cdot)$

      \State Let $S_{n-1} = \sum_{i=1}^{n-1} a_i X_i \text{ and }
      M_{n-1} = \max \{ a_i X_i: 1 \leq i \leq n-1\}$

      \If{ $S_{n-1} \leq b$ } \State Let \[ Z_1(n,b) = \bar{F} \left(
        \frac{1}{a_n} \left(\left( b -S_{n-1} \right) \vee M_{n-1}
        \right)\right)\]
      \EndIf 
      \State Return $Z_1(n,b)$
      \\
      \EndProcedure

      \Procedure{Estimator2}{$n,b$} \State Generate a sample of $J_n$
      such that for $j = 1,\ldots,n-1, \Pr \{ J_n = j\} = q(j,n) :=
      a_j/\sum_{i=1}^{n-1}a_i$

      \State For $1 \leq i \leq n, i \neq J_n$ simulate $X_i$
      independently from the distribution $F(\cdot)$

      \State Let $S_n^{(-J_n)} = \sum_{i=1, i \neq J_n}^n a_i X_i,
      \ M_n^{(-J_n)} = \max\{a_i X_i: i \leq n, i \neq J_n\},$
      \begin{align*}
        Z_{2,1} (n,b) &= \bar{F} \left( \frac{1}{a_{_{J_n}}} \left(
            \left( b -S_{n}^{(-J_n)} \right) \vee
            M_n^{(-J_n)}\right) \right),\\
        Z_{2,2} (n,b) &= \bar{F} \left( \frac{1}{a_{_{J_n}}} \left(
            \left( b -S_{n-1}^{(-J_n)} \right) \vee
            M_n^{(-J_n)} \right) \right) \text{ and }\\
        Z_2(n,b) &= \frac{Z_{2,1}(n,b) - Z_{2,2}(n,b)}{q(J_n,n)}.
      \end{align*} 
      \State Return $Z_2(n,b)$
      \\
      \EndProcedure
    \end{algorithmic}
\label{ALGO-SIM-LOC} 
\end{algorithm}

\subsection{Simulation of $\Pr \{ S > b\}$}
\label{OVRALL-ALG-DETAILS}
We use an auxiliary random variable $N$ to estimate the tail
probabilities of the infinite series $S = \sum_n a_n X_n.$
Recall that \textsc{LocalSimulation$(n,b)$} is
a simulation procedure introduced in Algorithm \ref{ALGO-SIM-LOC} in
Section \ref{SEC-LOC-EST}, which for given values of $n$ and $b,$
returns realizations of random variable $\Zl(n,b)$ that has $\Pr\{S_n
> b\} - \Pr\{S_{n-1} > b\}$ as its expectation. Given $b > 0,$ we
present below Algorithm \ref{ALGO-OVRALL} that makes a call to
\textsc{LocalSimulation} procedure of Algorithm \ref{ALGO-SIM-LOC} and
returns
\[Z(b) := \frac{\Zl(N,b)}{p_{_N}}\] which is the estimator we propose
for computing the probability $\Pr \{ S > b\}.$

\begin{algorithm}[h]
  \caption{ Given $b > 0,$ the aim is to efficiently simulate $\Pr \{S
    > b\}$ }
    \begin{algorithmic}
      \\
      \State Generate a sample of $N$ such that $\Pr \{ N = n\} = p_n,
      \text{ for } n \geq 1$
   
      \State Let $\Zl(N,b) = \textsc{LocalSimulation}(N,b)$

      \State Let \[ Z(b) = \frac{\Zl (N,b)}{p_{_N}} \]

      \State Return $Z(b)$
    \end{algorithmic}
\label{ALGO-OVRALL} 
\end{algorithm}

\begin{theorem}
  The estimators $(Z(b): b > 0)$ are unbiased: that is, for every $b >
  0,$ 
  \[\E \left[Z(b)\right] = \Pr \{ S > b\}.\] 
  \label{THM-UNB-OVRALL}
\end{theorem}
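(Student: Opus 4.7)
The plan is to reduce the claim to the telescoping identity \eqref{RE-EXP-II}, which the problem setup has already established, by combining the local unbiasedness statements in Lemmas \ref{LEM-UNB-Z1} and \ref{LEM-UNB-Z2} with a conditioning argument on the auxiliary random variable $N$. The key observation is that in Algorithm \ref{ALGO-OVRALL} the variable $N$ is drawn independently of all the randomness used inside \textsc{LocalSimulation}, so that conditionally on $\{N = n\}$, the quantity $\Zl(N,b)$ has the same distribution as the standalone local estimator $\Zl(n,b)$.

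Carrying this out, I would first write
\[
\E[Z(b)] = \sum_{n\geq 1} \E\!\left[\frac{\Zl(n,b)}{p_n}\,\mathbb{I}(N=n)\right] = \sum_{n \geq 1} \frac{\E[\Zl(n,b)]}{p_n}\,\Pr\{N = n\} = \sum_{n \geq 1} \E[\Zl(n,b)],
\]
invoking the independence of $N$ from the inner simulation to factor the expectation. By definition $\Zl(n,b) = Z_1(n,b) + Z_2(n,b)$, so Lemmas \ref{LEM-UNB-Z1} and \ref{LEM-UNB-Z2} and the decomposition \eqref{PARTITION-ON-MAX} give
\[
\E[\Zl(n,b)] = p_1(n,b) + p_2(n,b) = \Pr\{S_n > b\} - \Pr\{S_{n-1} > b\}.
\]
Substituting this back, the sum telescopes (using $S_0 = 0$, hence $\Pr\{S_0 > b\} = 0$) to $\lim_{n\to\infty}\Pr\{S_n > b\}$, which is $\Pr\{S > b\}$ by the first line of \eqref{RE-EXP-II}.

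The main obstacle I anticipate is justifying the interchange of sum and expectation in the first display. Because $Z_1(n,b)$ is nonnegative but $Z_2(n,b)$ is a signed difference of conditional probabilities scaled by $1/q(J_n,n)$, absolute summability is not immediate and Tonelli does not apply directly. My plan is to handle the two pieces separately: for the $Z_1$ contribution Tonelli applies because the summands are nonnegative, and for the $Z_2$ contribution I would exploit the conditional expectation identities \eqref{INTER-PROB-TO-EST-1}--\eqref{INTER-PROB-TO-EST-2} to show that, conditional on the event $\{N=n, J_n=j\}$, the scaled difference $(Z_{2,1}(n,b) - Z_{2,2}(n,b))/q(j,n)$ has expectation $\Pr\{S_n > b, \Max_n = j\} - \Pr\{S_{n-1} > b, \Max_n = j\}$, which summed over $j < n$ gives $p_2(n,b)$; the total series $\sum_n (p_1(n,b) + p_2(n,b))$ is then absolutely convergent because it equals $\Pr\{S > b\} - \lim \Pr\{S_0 > b\} < \infty$ once one bounds $|p_2(n,b)| \le \Pr\{S_n > b\} + \Pr\{S_{n-1} > b\}$ together with a standard fact that, by $S_n \to S$ a.s. and continuity of the distribution of $S$ at $b$, these tail probabilities are uniformly bounded. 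Once integrability is in hand, the rearrangement used above is legitimate and the theorem follows.
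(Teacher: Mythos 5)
Your argument is the same as the paper's: condition on $N$ (equivalently, partition on $\{N=n\}$, using that $N$ is drawn independently of the local randomness), invoke Lemmas \ref{LEM-UNB-Z1} and \ref{LEM-UNB-Z2} together with \eqref{PARTITION-ON-MAX} to get $\E[\Zl(n,b)] = \Pr\{S_n>b\}-\Pr\{S_{n-1}>b\}$, and let the sum telescope to $\lim_n \Pr\{S_n>b\} = \Pr\{S>b\}$. You also correctly point out that, because $Z(b)$ can be negative (through $Z_2$), the interchange of $\E$ and $\sum_n$ — equivalently, the tower-property step $\E[Z(b)]=\E[\E[Z(b)\mid N]]$ that the paper uses tacitly — requires an integrability justification, namely $\sum_n \E[|\Zl(n,b)|]<\infty$. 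The paper does gloss over this.

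However, the patch you propose for that gap does not work. First, the bound $|p_2(n,b)|\le \Pr\{S_n>b\}+\Pr\{S_{n-1}>b\}$ together with uniform boundedness of the right-hand side is not a summability argument: $\Pr\{S_n>b\}\to\Pr\{S>b\}>0$, so these terms do not vanish and $\sum_n(\Pr\{S_n>b\}+\Pr\{S_{n-1}>b\})$ diverges. Second, and more fundamentally, absolute convergence of $\sum_n |p_2(n,b)| = \sum_n |\E[Z_2(n,b)]|$ is the wrong target; Fubini requires $\sum_n \E[|Z_2(n,b)|]<\infty$, and $\E|Z_2|$ can be much larger than $|\E Z_2|$. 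The needed integrability is in fact delivered by the second-moment estimates proved later in Section~\ref{SEC-VAR-ANAL-CHAP5}: by Cauchy--Schwarz $\E[|Z_2(n,b)|]\le\sqrt{\E[Z_2^2(n,b)]}$, which \eqref{EFF-Z2-I}--\eqref{EFF-Z2-II} show to be $O(n^{2/\eta}p_n\bar F(b))$ (or $O(p_n\bar F(b))$ when $\kappa<\infty$), and this is summable in $n$. For the nonnegative $Z_1$ piece Tonelli applies as you say, and finiteness of $\sum_n p_1(n,b)$ can be seen, for instance, from $(b-S_{n-1})\vee M_{n-1}\ge b/n$, whence $p_1(n,b)\le\bar F\bigl(b/(na_n)\bigr)$, and this is summable under Assumption~\ref{COEFF-ASSUMP-1} since $na_n$ is bounded and $\alpha>2$. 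So the integrability claim is true, but the route you sketch to it is not.
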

\begin{proof}
  Since $\E \left[ \Zl (n,b)\right] = \Pr\{S_n > b\} - \Pr\{S_{n-1} >
  b\}$ for every $n$ and $b,$
  \begin{align*}
    \E \left[Z(b) \right] &= \E \left[ \E \left[
        \frac{\Zl(N,b)}{p_{_N}}
        \left|\frac{}{}\right. N \right]\right]\\
    &= \E \left[ \frac{\Pr \left\{S_{_N} > b \ |\ N \right\}-\Pr
        \left\{S_{_{N-1}}>b \ |\ N \right\}}{p_{_N}} \right]\\
    &= \sum_n\Pr\{N=n\} \frac{\Pr \left\{S_{n} > b \right\}-\Pr
      \left\{S_{n-1}>b\right\}}{p_{n}}.
  \end{align*}
  Since $\Pr\{ N = n \} = p_n,$ it is immediate that,
  \begin{align*}
    \E \left[Z(b) \right] = \sum_n\left[\Pr \left\{S_{n} > b
      \right\}-\Pr \left\{S_{n-1}>b\right\}\right] = \lim_n \Pr \{
    S_{n} > b \}.
  \end{align*}
  Since $S_n \rightarrow S$ almost surely, as $n \rightarrow \infty,$
  $\lim_n \Pr \{ S_n > b \}$ equals $\Pr \{ S > b \}.$ Thus, we have
  that the estimators $Z(b)$ are unbiased.
\end{proof}

Theorem \ref{THM-UNB-OVRALL} above re-emphasizes the fact that $Z(b)$
returned by Algorithm \ref{ALGO-OVRALL} is unbiased in the estimation
of $\Pr \{S > b\}$ for every choice of $(p_n: n \geq 1)$ satisfying
$p_n > 0$ and $\sum_n p_n = 1.$ However, for our simulation procedure, 
we take
\begin{equation}
  p_n := c_b \left( a_n^\alpha + \frac{a_n}{b^r}\right),
  \label{PROB-CHOICE-OUTER-RANDOMIZATION}
\end{equation}
for some $r \geq 1.$ As one can infer from the variance analysis in
Section \ref{SEC-VAR-ANAL-CHAP5}, the choice of $(p_n: n \geq 1)$ as
in \eqref{PROB-CHOICE-OUTER-RANDOMIZATION} is the smallest choice that
makes the ratio $\E\left[\Zl^2(n,b)\right]/p_n^2$ uniformly bounded by
a positive constant that is not dependent on $n.$

\section{Analysis of Variance of $Z(b)$}
\label{SEC-VAR-ANAL-CHAP5}
The aim of this section is to prove the following theorem when
Assumptions \ref{REG-VAR-TAIL-ASSUMP} and \ref{COEFF-ASSUMP-1} are in
force: 
\begin{theorem}
  For the choice of probabilities $(p_n: n \geq 1)$ as in
  (\ref{PROB-CHOICE-OUTER-RANDOMIZATION}), if $r$ is taken larger than
  1, the family of estimators $(Z(b): b > 0)$ returned by Algorithm
  \ref{ALGO-OVRALL} has vanishing relative error, asymptotically, as
  $b \rightarrow \infty.$ In other words,
  \begin{align*}
    \lim_{b \rightarrow \infty} \frac{\E \left[ Z^2(b) \right]}{\Pr \{
      S > b \}^2} = 1.
  \end{align*}
  \label{THM-EFF-OVRALL}
\end{theorem}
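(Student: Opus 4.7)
By Theorem~\ref{THM-UNB-OVRALL}, $\E[Z(b)] = \Pr\{S>b\}$, so Jensen's inequality gives $\E[Z^2(b)] \geq \Pr\{S>b\}^2$; it remains to prove the reverse asymptotic $\limsup_{b\to\infty} \E[Z^2(b)]/\Pr\{S>b\}^2 \leq 1$. Conditioning on $N$ gives $\E[Z^2(b)] = \sum_n \E[\Zl^2(n,b)]/p_n$, and since $Z_1(n,b)$ and $Z_2(n,b)$ are constructed from independent samples in Algorithm~\ref{ALGO-SIM-LOC}, $\E[\Zl^2(n,b)] = \E[Z_1^2(n,b)] + 2p_1(n,b)p_2(n,b) + \E[Z_2^2(n,b)]$. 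The plan is to show that the $Z_1^2$ sum matches the leading order $\Pr\{S>b\}^2$ exactly, while the cross and $Z_2^2$ sums are $o(\Pr\{S>b\}^2)$. Throughout one uses the classical heavy-tail asymptotic $\Pr\{S>b\} \sim \bar{F}(b)\sum_n a_n^\alpha$, which holds under Assumptions~\ref{REG-VAR-TAIL-ASSUMP}--\ref{COEFF-ASSUMP-1} by standard regular-variation arguments.

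For the $Z_1$ contribution, write $Z_1(n,b) = \bar{F}(y_n/a_n)\mathbb{I}(S_{n-1}\leq b)$ with $y_n \geq b-S_{n-1}$. Since $\E X^2 < \infty$ and $\sum_k a_k^2 < \infty$, $S_{n-1}$ has finite variance; Potter's bounds~\eqref{LONG-TAIL-EXT} together with dominated convergence then give, for each fixed $n$,
\begin{equation*}
  \E[Z_1^2(n,b)] \ = \ (1+o(1))\,\bar{F}(b/a_n)^2 \ \sim \ a_n^{2\alpha}\bar{F}(b)^2.
\end{equation*}
Potter's bounds also produce a uniform-in-$n$ majorant of the form $\E[Z_1^2(n,b)]/(p_n\bar{F}(b)^2) \leq C a_n^{\alpha-\varepsilon}$, which is summable for small $\varepsilon$ since $a_n\in(0,1)$ and $\sum a_n<\infty$; combined with $c_b \to (\sum_n a_n^\alpha)^{-1}$, a second application of dominated convergence (now in $n$) yields
\begin{equation*}
  \sum_n \frac{\E[Z_1^2(n,b)]}{p_n} \ \sim \ \bar{F}(b)^2\Bigl(\sum_n a_n^\alpha\Bigr)^{\!2} \ \sim \ \Pr\{S>b\}^2.
\end{equation*}

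The core technical step is to prove $\sum_n \E[Z_2^2(n,b)]/p_n = o(\bar{F}(b)^2)$. Here the cancellation inside $Z_{2,1}-Z_{2,2}$ is crucial: the two conditional probabilities differ only through the single increment $a_nX_n$. On the event $\{|a_nX_n|\leq b/2\}$, a mean-value estimate together with the density asymptotic $f(x)\sim\alpha\bar{F}(x)/x$ gives $|Z_{2,1}(n,b)-Z_{2,2}(n,b)| \lesssim a_n|X_n|\,\bar{F}(b/a_{J_n})/b$, while on the complementary event the trivial bound $|Z_{2,1}-Z_{2,2}|\leq 1$ combined with $\Pr\{|X_n|>b/(2a_n)\}\lesssim a_n^\alpha\bar{F}(b)$ controls the tail. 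Squaring, dividing by $q(J_n,n)=a_{J_n}/\sum_{i<n} a_i$, and summing over $j$ using $\sum_j a_j^{2\alpha-1}<\infty$ (a consequence of $2\alpha-1>1$, $a_j\in(0,1)$, and $\sum a_j<\infty$) produces $\E[Z_2^2(n,b)] \lesssim a_n^2\bar{F}(b)^2/b^2$. Dividing by $p_n \geq c_b a_n/b^r$ and summing against $\sum a_n<\infty$ then yields the required $o(\bar{F}(b)^2)$ bound under the hypothesis $r>1$. Finally, Cauchy--Schwarz and Jensen ($p_i(n,b)^2 \leq \E[Z_i^2(n,b)]$) dispose of the cross term:
\begin{equation*}
  \sum_n \frac{2p_1(n,b)p_2(n,b)}{p_n} \ \leq \ 2\Bigl(\sum_n\frac{\E[Z_1^2(n,b)]}{p_n}\Bigr)^{\!1/2}\Bigl(\sum_n\frac{\E[Z_2^2(n,b)]}{p_n}\Bigr)^{\!1/2} \ = \ o(\Pr\{S>b\}^2).
\end{equation*}

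The main obstacle is the uniform-in-$n$ control of $\E[Z_2^2(n,b)]$ in the regime $|X_n|\gtrsim b/a_n$, where the Asmussen--Kroese-style cancellation fails and the contribution must be absorbed through direct heavy-tail estimates; the $a_n/b^r$ regularizer built into $p_n$ is engineered precisely to keep $\E[\Zl^2(n,b)]/p_n$ summable over $n$ while simultaneously letting the sum vanish in the limit, and striking this balance across both the small-$a_n$ and large-$a_n$ ranges is the delicate accounting that drives the proof.
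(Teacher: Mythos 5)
Your high-level organization (Jensen lower bound, decomposition into $Z_1^2$, $Z_2^2$, and cross terms, the asymptotic $\Pr\{S>b\}\sim \bar F(b)\sum_n a_n^\alpha$) is correct and parallels the paper, but the key uniform-in-$n$ estimates you assert are not justified and one of them, as stated, misses the genuinely hard regime. For $Z_1$, you claim a Potter-bound majorant $\E[Z_1^2(n,b)]/(p_n\bar F(b)^2)\le C a_n^{\alpha-\varepsilon}$. This does not follow from Potter's bounds alone: the argument of $\bar F$ in $Z_1$ is $a_n^{-1}\bigl((b-S_{n-1})\vee M_{n-1}\bigr)$, which can be as small as $b/(na_n)$ on the event $\{S_{n-1}>(1-\gamma)b,\ M_{n-1}\le\gamma b\}$, so the crude bound is $\bar F^2(b/(na_n))\approx (na_n)^{2\alpha}\bar F^2(b)$, which is larger than what you claim by roughly $n^{2\alpha}$. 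Controlling this requires showing the offending event has probability $O(\bar F^{(1-\gamma)/\gamma}(b))$ uniformly in $n$ — this is exactly Proposition~\ref{PROP-RES-PROB} in the paper (a Chernoff/exponential-moment bound for truncated heavy-tailed sums), which your proposal neither invokes nor reproduces.

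The same gap appears, more seriously, in your $Z_2$ analysis. Your mean-value bound $|Z_{2,1}-Z_{2,2}|\lesssim a_n|X_n|\bar F(b/a_{J_n})/b$ implicitly evaluates the derivative $f(\zeta/a_{J_n})$ at $\zeta\approx b$, but $\zeta$ lies between $\xi_1$ and $\xi_2$, and $\xi_1\wedge\xi_2$ can be far below $b$ whenever $S_n^{(-J_n)}$ (not $a_nX_n$) is close to $b$ while $M_n^{(-J_n)}$ is small. Splitting on $\{|a_nX_n|>b/2\}$ therefore attacks the wrong event: the dangerous regime is $\{\xi_1\wedge\xi_2\ \text{small}\}$, which is controlled in the paper via Proposition~\ref{PROP-RES-PROB} together with a case split on whether $\kappa=\sup\{k:\varlimsup_n n^k a_n<\infty\}$ is finite or infinite (Lemmas~\ref{LEM-J2-I} and~\ref{LEM-J2-II}); the finite-$\kappa$ case needs a separate product-of-CDF estimate that your proposal does not supply. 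Without a residual-probability bound of this type, neither your dominated-convergence step in $n$ for $Z_1^2$ nor your $\E[Z_2^2(n,b)]\lesssim a_n^2\bar F(b)^2/b^2$ estimate holds uniformly, and the proof does not close. (A minor separate point: you need the independence of \textsc{Estimator1} and \textsc{Estimator2} in Algorithm~\ref{ALGO-SIM-LOC} to write $\E[Z_1Z_2]=p_1p_2$; the paper instead bounds the cross term by Cauchy--Schwarz, which avoids relying on that implementation detail.)
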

\noindent To prove that the estimators $Z(b)$ have low variance
asymptotically as in the statement of Theorem \ref{THM-EFF-OVRALL}, we
need to establish that $\E [\Zl^2(n,b)]$ is comparable to that of
$p_n^2\bar{F}^2(b),$ which is challenging because proving such a
proposition will have to establish that $\E[\Zl^2(n,b)]$ is low with
respect to two rarity parameters $n$ and $b.$ We accomplish this in
the following section.

\subsection{Uniform bounds on variance of local estimators}
To obtain bounds on variance of estimators $\Zl (n,b),$ we separately
analyse the second moments of $Z_1(n,b)$ and $Z_2(n,b)$ (defined in
Algorithm \ref{ALGO-SIM-LOC}) below. Proposition \ref{PROP-RES-PROB}
which is stated below and proved in the appendix will be useful in the
analysis.

\begin{proposition}
  Under Assumptions \ref{REG-VAR-TAIL-ASSUMP} and
  \ref{COEFF-ASSUMP-1}, 
  \begin{align*}
    \Pr \left\{ S_n^{(-j)} > b , \ M_n^{(-j)} \leq \frac{b}{k}\right\}
    &\leq \exp \left(k+o(1)\right) \left( \frac{\sum_i a_i^\alpha}{k}
      \bar{F}\left(\frac{b}{k}\right)\right)^k, \text{ as } b \rightarrow
    \infty
  \end{align*}
  uniformly in $n,$ for every $j \leq n$ and $k > 1.$
  \label{PROP-RES-PROB}
\end{proposition}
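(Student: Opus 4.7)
The plan is to exploit the heavy-tail heuristic that the event $\{S_n^{(-j)}>b,\,M_n^{(-j)}\le b/k\}$ essentially forces roughly $k$ summands to lie near the cap $b/k$, and to turn this into a union-bound-type estimate. The claimed $\bigl(\sum_i a_i^\alpha\bar{F}(b/k)/k\bigr)^k$ will come out of such an argument, with the combinatorial factor $e^k$ emerging through Stirling applied to $1/k!$.

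First I would reduce to a truncated sum: on $\{M_n^{(-j)}\le b/k\}$ each increment $a_iX_i$ coincides with $Y_i := a_iX_i\mathbb{I}\{a_iX_i\le b/k\}$, so the probability in question is bounded by $\Pr\{\sum_{i\neq j}Y_i>b\}$. Next, fix a small $\epsilon>0$ (eventually sent to $0$ slowly as $b\to\infty$) and call index $i$ \emph{large} if $a_iX_i>b/(k+\epsilon)$. Writing $N$ for the count of large indices, observe that when $N<k$ the large contributions alone cannot exceed $(k-1)\cdot b/k$, so
\[\Pr\Bigl\{\sum_{i\neq j}Y_i>b\Bigr\}\le\Pr\{N\ge k\}+\Pr\Bigl\{\sum_{i\neq j}W_i>b/k\Bigr\},\]
where $W_i := a_iX_i\mathbb{I}\{a_iX_i\le b/(k+\epsilon)\}$.

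The main term $\Pr\{N\ge k\}$ would be handled via the factorial-moment bound $\Pr\{N\ge k\}\le\E\binom{N}{k}$. Independence of the $X_i$'s gives
\[\E\binom{N}{k}\le\frac{1}{k!}\Bigl(\sum_i\bar{F}\bigl(\tfrac{b}{(k+\epsilon)a_i}\bigr)\Bigr)^k,\]
and by Potter's bound \eqref{LONG-TAIL-EXT} each summand is at most $(1+o(1))\bigl(\tfrac{k+\epsilon}{k}\bigr)^\alpha a_i^\alpha\bar{F}(b/k)$ uniformly in $i$. Combining with $1/k!\le (e/k)^k$ and letting $\epsilon\to 0$ slowly then delivers the desired leading estimate $e^{k+o(1)}\bigl(\sum_i a_i^\alpha\bar{F}(b/k)/k\bigr)^k$. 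Uniformity in $n$ and $j$ passes through because the only global dependence is through $\sum_i a_i^\alpha\le\sum_i a_i<\infty$ (valid under Assumption \ref{COEFF-ASSUMP-1}, since $a_i\in(0,1)$ and $\alpha>2$).

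The main obstacle is the residual term $\Pr\{\sum_{i\neq j}W_i>b/k\}$. The variables $W_i$ are bounded above by $b/(k+\epsilon)$ with non-positive mean, but the threshold $b/k$ exceeds the per-summand cap only by the marginal factor $(k+\epsilon)/k$, and a direct Chernoff argument in this ``single-deviation'' regime delivers only an $O(1)$ bound rather than the algebraically small quantity required. To close the gap I would invoke a Fuk--Nagaev--type tail inequality for sums of truncated zero-mean variables (or, alternatively, a recursive application of the proposition itself at a slightly smaller effective exponent), which exploits the fact that exceeding $b/k$ by a sum of $W_i$'s each bounded by $b/(k+\epsilon)$ still requires the joint occurrence of at least two moderately large summands. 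This makes the residual polynomially smaller in $\bar{F}(b/k)$ than the main term, hence absorbed into the $\exp(o(1))$ prefactor.
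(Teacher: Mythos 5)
The paper proves the proposition by a Chernoff argument: Markov's inequality applied to $\exp(\theta S_n^{(-j)})$, a careful bound on the truncated exponential moment (Lemma~\ref{Rthx}), and an optimal choice $\theta_b \asymp -\tfrac{k}{b}\log\bar F(b/k)$. You take a structurally different route, counting the summands exceeding a cutoff just below $b/k$ and bounding that count via factorial moments. Your main-term computation is fine: $\Pr\{N\ge k\}\le\frac{1}{k!}\bigl(\sum_i\bar F(b/((k+\epsilon)a_i))\bigr)^k$ together with Potter's bounds and $1/k!\le (e/k)^k$ does reproduce $e^{k+o(1)}\bigl(\sum_i a_i^\alpha\bar F(b/k)/k\bigr)^k$ once $\epsilon\to 0$.

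The gap is in the residual term, and it is not a detail. You assert that $\Pr\{\sum_{i\neq j}W_i>b/k\}$, with $W_i\le b/(k+\epsilon)$, is ``polynomially smaller in $\bar F(b/k)$ than the main term.'' That is false in the regime the paper actually uses. With $M=b/(k+\epsilon)$, $t=b/k$, $\sigma^2=\sum_i\mathrm{Var}(W_i)=O(1)$, Bennett/Fuk--Nagaev gives
\[
\Pr\Bigl\{\textstyle\sum_i W_i>t\Bigr\}\;\lesssim\;\Bigl(\frac{e\sigma^2}{Mt}\Bigr)^{t/M}\;\asymp\;b^{-2(k+\epsilon)/k},
\]
whereas the target is $\bar F(b/k)^k\asymp b^{-k\alpha+o(1)}$. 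Since $\alpha>2$ and $k>1$, we have $k\alpha>2$, and $2(k+\epsilon)/k\ge k\alpha$ forces $\epsilon\ge k(k\alpha/2-1)$, a fixed positive number (e.g.\ $\epsilon\ge 4$ when $k=2,\alpha=3$), not something that can be ``sent to $0$ slowly.'' With such a large fixed $\epsilon$ the main-term prefactor picks up $((k+\epsilon)/k)^{k\alpha}$, which is a constant strictly bigger than $1$ and cannot be absorbed into $\exp(o(1))$. The recursive route fails the same way: the residual is the proposition with effective parameter $k'=(k+\epsilon)/k<k$, and the recursive bound is $\asymp\bar F(b/k)^{k'}\gg\bar F(b/k)^k$; iterating never reduces $k'$ below $1$, and the bound degrades rather than closes. (Note also that your argument implicitly requires $k$ to be an integer, while the remark following the proposition stresses that the paper needs it for real $k>1$.)

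The underlying issue is intrinsic to any ``count big jumps at a cutoff, then bound the leftover sum'' decomposition here: a cutoff near $b/k$ leaves a residual that is again a constrained-max event at essentially the same scale (so its probability is only $\bar F(b/k)^{1+o(1)}$, far larger than $\bar F(b/k)^k$ for $k\ge 2$), while a cutoff far below $b/k$ makes the factorial-moment term polynomially too large. The Chernoff method sidesteps the split entirely; if you want a jump-counting proof you would need a much more delicate multi-scale bookkeeping that simultaneously tracks the sizes of all contributing jumps, not just how many cross a single threshold.
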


\begin{remark}
  \textnormal{For large values of $b,$ Proposition \ref{PROP-RES-PROB} roughly
  captures the idea that when the maximum of the increments are
  constrained, for example, to be smaller than $b/2,$ the likely way
  for a heavy-tailed sum to become larger than $b$ is by having two
  large increments roughly of size $b/2.$ Though $k$ being an integer
  helps in understanding the upper bound in Proposition
  \ref{PROP-RES-PROB} in terms of the number of jumps, one can check
  from the proof of Proposition \ref{PROP-RES-PROB} that
  the upper bound holds true for $k$ being any real number larger than
  1.}
\end{remark}

\subsubsection{Analysis of $Z_1(n,b)$}
Recall that 
\[ Z_1(n,b) := \bar{F} \left( \frac{1}{a_n} \left(\left( b -S_{n-1}
    \right) \vee M_{n-1} \right)\right).\]
To upper bound second moment of $Z_1(n,b),$ we consider the following
two quantities:
\begin{align*}
  I_1(n,b) &:= \E \left[ Z_1^2(n,b); (b-S_{n-1}) \vee M_{n-1}
    \geq \gamma b \right] \text{ and }\\
  I_2(n,b) &:= \E \left[ Z_1^2(n,b); (b-S_{n-1}) \vee M_{n-1} < \gamma
    b \right]
\end{align*}
for $\gamma \in (0,1).$
\begin{lemma}
  Under Assumptions \ref{REG-VAR-TAIL-ASSUMP} and
  \ref{COEFF-ASSUMP-1},
  \[ \varlimsup_{b \rightarrow \infty} \sup_{n > 1} \frac{I_1(n,b)}{
    (a_n^{\alpha - \delta} \bar{F}(b))^2} \leq (1+\delta)^2\] for
  every $\delta > 0$ and $\gamma \in (0,1).$
  \label{LEM-I1}
\end{lemma}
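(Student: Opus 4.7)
The plan is to combine two applications of Potter's bound \eqref{LONG-TAIL-EXT} with a decomposition of the event $\{(b - S_{n-1}) \vee M_{n-1} \geq \gamma b\}$ into a typical ``bulk'' part and a negligible atypical complement. Write $W_n := (b-S_{n-1}) \vee M_{n-1}$. On $\{W_n \geq \gamma b\}$, since $a_n \in (0,1)$, the argument of $\bar{F}$ in $Z_1(n,b)$ satisfies $W_n/a_n \geq \gamma b/a_n \geq \gamma b$, which diverges with $b$, so Potter's bound applies. Applying it with $v = 1/a_n \geq 1$, for any small $\delta' > 0$ and all sufficiently large $b$,
\[
\bar{F}\!\left(\frac{W_n}{a_n}\right) \;\leq\; (1+\delta')\, a_n^{\alpha - \delta'}\, \bar{F}(W_n).
\]
This peels off the desired factor $a_n^{\alpha-\delta'}$, and reduces matters to controlling $\E[\bar{F}(W_n)^2; W_n \geq \gamma b, S_{n-1} \leq b]$.

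Fix $\epsilon \in (0, 1-\gamma)$ and introduce the bulk event $\mathcal{B} := \{|S_{n-1}| \leq \epsilon b\}$. On $\mathcal{B}$, $W_n \geq b - S_{n-1} \geq (1-\epsilon)b$, so a second use of Potter's bound yields $\bar{F}(W_n) \leq (1+\delta')(1-\epsilon)^{-\alpha-\delta'}\bar{F}(b)$. Squaring, taking expectation, and dividing by $(a_n^{\alpha-\delta} \bar{F}(b))^2$---with $\delta' \leq \delta$ so that the residual $a_n^{2(\delta - \delta')} \leq 1$ is absorbed---the contribution of $\mathcal{B}$ to the ratio is at most $(1+\delta')^4 (1-\epsilon)^{-2(\alpha+\delta')}$, which can be made $\leq (1+\delta)^2$ by choosing $\delta'$ and $\epsilon$ small enough.

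On the complement $\mathcal{B}^c = \{|S_{n-1}| > \epsilon b\}$, the crude pointwise bound $Z_1^2 \leq \bar{F}(\gamma b/a_n)^2$, combined with two Potter applications, gives $Z_1^2 \leq C_\gamma\, a_n^{2(\alpha-\delta')}\,\bar{F}(b)^2$ for a finite constant $C_\gamma = (1+\delta')^4 \gamma^{-2(\alpha+\delta')}$. Paired with a uniform-in-$n$ tail estimate $\sup_n \Pr\{|S_{n-1}| > \epsilon b\} = O(\bar{F}(b))$, the $\mathcal{B}^c$ contribution to the ratio is then $O(\bar{F}(b)) \to 0$. The right-tail piece of that probability bound is obtained by decomposing on $\{M_{n-1} > \epsilon b/2\}$: the probability of this sub-event is at most $\sum_i \bar{F}(\epsilon b/(2 a_i)) = O(\bar{F}(b))$ via union bound, Potter's bound, and $\sum_i a_i^{\alpha-\delta'} < \infty$ (from Assumption \ref{COEFF-ASSUMP-1}), while on its complement Proposition \ref{PROP-RES-PROB} with $k = 2$ gives an $O(\bar{F}(b)^2)$ bound.

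The main obstacle is the uniform-in-$n$ tail estimate $\sup_n \Pr\{|S_{n-1}| > \epsilon b\} = O(\bar{F}(b))$; the right-tail piece follows from Proposition \ref{PROP-RES-PROB} as sketched, but the left-tail piece $\{S_{n-1} < -\epsilon b\}$ requires additional input---either a symmetry argument, an analogous regular-variation assumption on the left tail of $F$, or a separate large-deviations estimate exploiting the zero-mean and finite-variance structure of $X$. Beyond this, the remaining work is the careful bookkeeping of the Potter's exponent $\delta'$ against the lemma's $\delta$ and the choice of $\epsilon$, which is routine.
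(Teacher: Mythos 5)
Your proposal correctly identifies its own weak point: the left tail $\{S_{n-1}<-\epsilon b\}$ is not controlled by Assumptions \ref{REG-VAR-TAIL-ASSUMP} and \ref{COEFF-ASSUMP-1}, which constrain only the right tail of $F$. However, this obstacle is self-inflicted. In passing from the bulk set $\mathcal B$ to $W_n\ge(1-\epsilon)b$ you use only the one-sided inequality $S_{n-1}\le\epsilon b$; the constraint $S_{n-1}\ge-\epsilon b$ plays no role. If you take $\mathcal B=\{S_{n-1}\le\epsilon b\}$ instead, the atypical event $\mathcal B^c=\{S_{n-1}>\epsilon b\}$ is a pure right-tail event, for which your sketch (union bound on $\{M_{n-1}>\epsilon b/2\}$ plus Proposition \ref{PROP-RES-PROB} with $k=2$) does give $\sup_n\Pr\{\mathcal B^c\}=O(\bar F(b))$. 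Indeed, on $\{S_{n-1}<0\}$ one has $W_n\ge b$, so $Z_1$ is no larger there than on the bulk; the left tail only helps. With that one change your argument closes.

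The paper's proof is substantially leaner and sidesteps this apparatus entirely. After one application of Potter's bound it writes the integrand as $(1+\delta)^2 a_n^{2(\alpha-\delta)}h^2(S_{n-1}/b)$ with $h(x)=\bigl((1-x)\vee\gamma\bigr)^{-(\alpha+\delta)}$, and then exploits three facts simultaneously: $h$ is bounded (because of the floor $\gamma$ built into the definition of $I_1$), $h$ is non-decreasing, and $S_{n-1}\le\sum_n a_n X_n^+$, a single $n$-independent and a.s.\ finite random variable. Monotonicity yields a bound uniform in $n$; boundedness licenses bounded convergence; and $\sum_n a_n X_n^+/b\to 0$ a.s.\ sends the expectation to $h^2(0)=1$. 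No bulk/atypical decomposition, no uniform tail estimate on $S_{n-1}$, no call to Proposition \ref{PROP-RES-PROB}, and no auxiliary parameters $\delta'$, $\epsilon$ to juggle against $\delta$. Your route is salvageable once made one-sided, but it drags in the large-deviation machinery the paper reserves for the genuinely delicate term $I_2(n,b)$.
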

\begin{proof}
  From the definition of $Z_1(n,b),$ it is immediate that
  \begin{align*}
    I_1(n,b) \leq \bar{F}^2(b) \E \left[
      \frac{\bar{F}^2\left(\frac{b}{a_n}\left(\left(1-\frac{S_{n-1}}{b}\right)
            \vee \gamma \right)\right)}{\bar{F}^2(b)}\right].
  \end{align*}
  Since $\bar{F}(x) = x^{-\alpha + o(1)},$ given $\delta > 0,$ for $b$
  large enough,  because of \eqref{LONG-TAIL-EXT},
  we have that for
  every $n,$
  \begin{align*}
    \frac{\bar{F}\left(\frac{b}{a_n}\left(\left(1-\frac{S_{n-1}}{b}\right)
          \vee \gamma \right)\right)}{\bar{F}(b)} \leq (1+\delta)
    a_n^{\alpha-\delta} h\left( \frac{S_{n-1}}{b} \right),
  \end{align*}
  where $h(x) = \left((1-x) \vee \gamma \right)^{-(\alpha+\delta)}.$
  Therefore,
  \begin{align*}
    \sup_{n \geq 1}
    \frac{I_1(n,b)}{\left(a_n^{\alpha-\delta}\bar{F}(b)\right)^2} \leq
    (1+\delta)^2 \sup_{n \geq 1} \E \left[ h^2 \left(
        \frac{S_{n-1}}{b} \right)\right].
  \end{align*}
  Since $h(\cdot)$ is a non-decreasing function, it is immediate that
  \begin{align*}
    \sup_{n \geq 1}
    \frac{I_1(n,b)}{\left(a_n^{\alpha-\delta}\bar{F}(b)\right)^2} \leq
    (1+\delta)^2 \E \left[ h^2 \left( \frac{\sum_n a_n X_n^+ }{b}
      \right)\right],
  \end{align*}
  where $x^+ := \max\{x,0\}$ for $x \in \mathbb{R}.$ The following
  observations are in order:
  \begin{itemize}
  \item[1)] $h(\cdot)$ is bounded
  \item[2)] The random variable $\sum_n a_n X_n^+$ is proper (this is
    because $\sum_n a_n < \infty$ and hence a consequence of
    Kolmogorov's three-series theorem). Therefore,
    $b^{-1}\sum_na_nX_n^+ \rightarrow 0$ almost surely, as $b \rightarrow
    \infty.$
  \end{itemize}
  Then because of bounded convergence,
  \begin{align*}
    \E \left[ h^2\left( \frac{\sum_na_nX_n^+}{b}\right)\right]
    \rightarrow 1, \text{ as } b \rightarrow \infty.
  \end{align*}
  Thus, for every $\delta > 0,$ we have that
  \begin{align*}
    \varlimsup_{b \rightarrow \infty} \sup_{n \geq 1}
    \frac{I_1(n,b)}{\left(a_n^{\alpha-\delta}\bar{F}(b)\right)^2} \leq
    (1+\delta)^2.
  \end{align*}
\end{proof}

\begin{lemma}
  Under Assumptions \ref{REG-VAR-TAIL-ASSUMP} and
  \ref{COEFF-ASSUMP-1}, there exists $\gamma$ in $(0,1)$ such that
  \[ \varlimsup_{b \rightarrow \infty} \sup_{n > 1} \frac{\E \left[
      I_2(n,b)\right]}{ (p_n \bar{F}(b))^2} = 0.\]
  \label{LEM-I2}
\end{lemma}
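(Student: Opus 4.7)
The plan is to combine a rare-event probability bound for the $I_2$ event (from Proposition \ref{PROP-RES-PROB}) with a Potter-based pointwise bound on $Z_1^2$ that supplies the factor $a_n^{2(\alpha-\delta)}$ needed for uniformity in $n$. The final ratio is then controlled by exploiting both terms in the definition \eqref{PROB-CHOICE-OUTER-RANDOMIZATION} of $p_n$ via an AM-GM step.

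First, on the effective $I_2$ event (where $Z_1 \neq 0$), the conditions $(b-S_{n-1}) \vee M_{n-1} < \gamma b$ and $S_{n-1} \leq b$ together force $S_{n-1} > (1-\gamma)b$ and $M_{n-1} < \gamma b$. Applying Proposition \ref{PROP-RES-PROB} with $j = n$ (so that $S_n^{(-n)} = S_{n-1}$ and $M_n^{(-n)} = M_{n-1}$), $b \mapsto (1-\gamma)b$, and $k = (1-\gamma)/\gamma$ so that $b/k = \gamma b$, followed by one invocation of Potter's inequality to replace $\bar F(\gamma b)$ by a constant multiple of $\bar F(b)$, yields
\[
\Pr\{S_{n-1} > (1-\gamma)b,\ M_{n-1} \leq \gamma b\} \leq C_\gamma\,\bar F(b)^k,
\]
uniformly in $n$, using $\sum_i a_i^\alpha < \infty$.

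Next, I would partition the $I_2$ event along the geometric scale $\gamma_j := \gamma \cdot 2^{-j}$ in the parameter $Y := (b - S_{n-1}) \vee M_{n-1}$. On the slice $\{\gamma_{j+1}b \leq Y < \gamma_j b\}$, two applications of Potter's inequality (first with $v = 1/a_n > 1$ at scale $\gamma_{j+1}b$, then with $v = \gamma_{j+1} < 1$ at scale $b$) give $Z_1(n,b) \leq D_j\, a_n^{\alpha-\delta}\,\bar F(b)$ with $D_j^2 = O(2^{2j(\alpha+\delta)})$, while the slice's probability is at most $C\,\bar F(b)^{k_j}$ with $k_j = (1-\gamma_j)/\gamma_j \sim 2^j/\gamma$ (by Proposition \ref{PROP-RES-PROB} again). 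Since $\bar F(b)^{k_j}$ decays doubly exponentially in $j$ while $D_j^2$ grows only polynomially, the $j = 0$ slice dominates the resulting sum. The residual slice $\{Y < t_\delta\}$ (where Potter fails to apply to $Y/a_n$) forces $S_{n-1} \in [b - t_\delta, b]$ and $M_{n-1} < t_\delta$; Proposition \ref{PROP-RES-PROB} with $k \asymp b/t_\delta \to \infty$ provides a super-polynomially small probability that easily absorbs the crude bound $Z_1 \leq 1$. The net estimate is
\[
I_2(n,b) \leq C\, a_n^{2(\alpha-\delta)}\,\bar F(b)^{k+2}, \qquad \text{uniformly in } n > 1.
\]

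Finally, dividing by $(p_n \bar F(b))^2$ and using the AM-GM bound $p_n^2 = c_b^2(a_n^\alpha + a_n/b^r)^2 \geq 4\, c_b^2\, a_n^{\alpha+1}/b^r$ yields
\[
\frac{I_2(n,b)}{(p_n\,\bar F(b))^2} \leq \frac{C}{4 c_b^2}\; a_n^{\alpha-1-2\delta}\;\bar F(b)^k\; b^r.
\]
Since $\alpha > 2$, one picks $\delta$ small enough that $\alpha - 1 - 2\delta > 0$, whence $a_n^{\alpha-1-2\delta} \leq 1$ (as $a_n \in (0,1)$); then choosing $\gamma$ small enough that $k\alpha > r$ (i.e.\ $\gamma < \alpha/(\alpha+r)$), the right-hand side tends to $0$ as $b \to \infty$, uniformly in $n > 1$. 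The main obstacle is precisely the Potter step: bounding $Z_1^2$ uniformly when $Y$ is small requires both the geometric partition above and a separate extreme-tail estimate for the sub-Potter slice, and it is precisely the $a_n/b^r$ term in \eqref{PROB-CHOICE-OUTER-RANDOMIZATION} that (via AM-GM) prevents the ratio from blowing up for small $a_n$.
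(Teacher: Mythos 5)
Your proof takes a genuinely different route from the paper's, and the gap lies in the ``residual slice'' step. You bound $Z_1$ by $1$ on the event $\{Y < t_\delta\}$, thereby discarding the $a_n$-dependence of $Z_1(n,b)$; after the AM-GM step the ratio still carries a factor $a_n^{-(\alpha+1)}$, which is \emph{not} uniformly bounded in $n$ (Assumption~\ref{COEFF-ASSUMP-1} gives no lower bound on $a_n$, which may decay super-exponentially). To compensate you invoke Proposition~\ref{PROP-RES-PROB} with $k \asymp b/t_\delta \to \infty$, but the proposition and its proof (via Lemma~\ref{Rthx}, which needs $x_n\to\infty$ and $\phi_n x_n\to\infty$) hold for each fixed $k$ as $b\to\infty$; there is no uniformity in $k$, and with $b/k$ bounded the argument does not apply. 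The same difficulty already infects the slices with $k_j \asymp 2^j$ once $j$ approaches $\log_2(\gamma b/t_\delta)$, so the claim that ``the $j=0$ slice dominates'' is not justified for the full range of $j$.

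The ingredient that resolves all of this -- and that your argument misses -- is the deterministic lower bound used by the paper: whenever $S_{n-1} \leq b$ one has
\[
(b - S_{n-1}) \vee M_{n-1} \;\geq\; \frac{b}{n},
\]
because if $M_{n-1} \leq b/n$ then each positive increment is at most $b/n$, hence $S_{n-1} \leq (n-1)b/n$ and so $b - S_{n-1} \geq b/n$. This gives $Z_1(n,b) \leq \bar F\!\left(\tfrac{b}{n a_n}\right)$ \emph{pointwise} on the entire $I_2$ event, with no slicing and no residual case. Combined with $\sup_n n^2 a_n < \infty$ (a consequence of $\sum_n n a_n < \infty$) and one Potter bound, this yields $\bar F^2(b/(na_n)) \leq C\,(a_n/b^2)^{\alpha+o(1)}$ uniformly in $n$; multiplying by your single application of Proposition~\ref{PROP-RES-PROB} (which you do state correctly with fixed $k=(1-\gamma)/\gamma$) and dividing by $(p_n\bar F(b))^2$ using only $p_n \geq c_b a_n b^{-r}$ finishes the lemma for any $\gamma < 1/3$, since $\alpha>2$ makes $a_n^{\alpha-2+o(1)}\leq 1$. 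Your AM-GM trick on $p_n^2$ is clever but unnecessary once the $b/n$ lower bound is in hand. Note also that this same lower bound is exactly what would rescue your residual slice: $Y < t_\delta$ forces $n > b/t_\delta$, and on that range $Z_1 \leq \bar F(b/(na_n))$ supplies the missing $a_n$-dependence automatically.
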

\begin{proof}
  Observe that $\left(b-S_{n-1}\right)\vee M_{n-1}$ is at least $b/n,$
  and this is achieved with equality when $a_i X_i = b/n$ for every $i
  < n.$ Therefore,
  \begin{align}
    I_2(n,b) &:= \E \left[ Z_1^2(n,b); S_{n-1} > (1-\gamma)b, M_{n-1}
      \leq \gamma b \right] \nonumber\\
    &\leq \bar{F}^2\left(\frac{b}{na_n}\right) \Pr \left\{ S_{n-1} >
      (1-\gamma) b, M_{n-1} \leq \gamma b \right\} \label{INTER-I2}
  \end{align}
  Since $\sum_n na_n < \infty,$ $\sup_n n^2a_n$ exists. Additionally,
  since $\bar{F}(x) = x^{-\alpha}L(x) = x^{-\alpha + o(1)},$ one can
  write
  \[\bar{F}^2\left(\frac{b}{na_n}\right) \leq  (1+o(1)) \left(
    \frac{na_n}{b} \right)^{2(\alpha + o(1))} \leq (1+o(1))
  \left(\sup_n n^2 a_n\right)^{\alpha + o(1)}
  \left(\frac{a_n}{b^2}\right)^{\alpha+o(1)}\] uniformly in $n,$ as $b
  \rightarrow \infty.$ Further, it follows from Proposition
  \ref{PROP-RES-PROB} that for every $n,$
  \[ \Pr \left\{ S_{n-1} > (1-\gamma) b, M_{n-1} \leq \gamma b
  \right\} \leq C_\gamma \bar{F}^{\frac{1-\gamma}{\gamma}}(b),\]
  for some suitable constant $C_\gamma > 0$ and all $b$ large
  enough. Recall the definition of $p_n$ in
  (\ref{PROB-CHOICE-OUTER-RANDOMIZATION}). Since
  $p_n \geq c_b a_nb^{-r},$ it follows from \eqref{INTER-I2} that 
  \begin{align*}
    \frac{I_2(n,b)}{ \left(p_n\bar{F}(b) \right)^2} &\leq C_\gamma
    \left(\sup_n n^2 a_n\right)^{\alpha + o(1)}
    \left(\frac{a_n}{b^2}\right)^{\alpha+o(1)} \frac{b^{2r}}{c_b^2
      a_n^2}\frac{\bar{F}^{\frac{1-\gamma}{\gamma}}(b)}{\bar{F}^2(b)},
  \end{align*}
  uniformly in $n,$ as $b \rightarrow \infty.$ Since $\alpha > 2$ and
  $c_b \sim 1/\sum_n a_n^\alpha$ as $b \rightarrow \infty,$ it follows
  that
  \begin{align*}
    \varlimsup_{b \rightarrow \infty} \sup_{n \geq 1} \frac{I_2(n,b)}{
      \left(p_n\bar{F}(b) \right)^2} = 0
  \end{align*}
  for any choice of $\gamma < 1/3.$
\end{proof}

\noindent Recall that $p_n \geq c_b a_n^\alpha.$ Since $\E
[Z_1^2(n,b)]$ is the sum of $I_1(n,b)$ and $I_2(n,b),$
\begin{align*}
  \frac{\E
    \left[Z_1^2(n,b)\right]}{\left(p_n^{1-\delta}\bar{F}(b)\right)^2}
  \leq
  \frac{I_1(n,b)}{\left(\left(c_ba_n^{\alpha}\right)^{1-\delta}\bar{F}(b)\right)^2} 
  + \frac{I_2(n,b)}{\left(p_n\bar{F}(b)\right)^2}
\end{align*}
for every $n$ and $b.$ Further, we have that $c_b \sim 1/\sum_n
a_n^\alpha$ as $b \rightarrow \infty.$ Then the following is a simple
consequence of Lemmas \ref{LEM-I1} and \ref{LEM-I2}:
\begin{align}
  \label{EFF-Z1}
  \varlimsup_{b \rightarrow \infty} \sup_{n \geq 1} \frac{\E \left[
      Z_1^2(n,b)\right]}{\left( p_n^{1-\delta} \bar{F}(b)\right)^2}
  \leq \lim_{b \rightarrow \infty}\frac{1}{c_b^{2(1-\delta)}} \times
  (1+\delta)^2 + 0 = (1+\delta)^2
  \left(\sum_na_n^\alpha\right)^{2(1-\delta)}.
\end{align}

\subsubsection{Analysis of $Z_2(n,b)$} 
Recall that
\[ Z_2(n,b) = \frac{1}{q(J_n,n)}\left[
  \bar{F}\left(\frac{\xi_1}{a_{_{J_n}}}\right) -
  \bar{F}\left(\frac{\xi_2}{a_{_{J_n}}}\right)\right],\] where
\begin{align*}
  \xi_1 := \left( b-S_n^{(-J_n)}\right) \vee M_n^{(-J_n)} \text{ and }
  \xi_2 := \left( b-S_{n-1}^{(-J_n)}\right) \vee M_n^{(-J_n)}.
\end{align*}
\noindent To upper bound the second moment of $Z_2(n,b),$ we need the
following non-restrictive smoothness assumption on $\bar{F}(\cdot):$
\begin{assumption}
  There exists a $t_0$ such that the slowly varying function
  $L(\cdot)$ in $\bar{F}(x) = L(x)x^{-\alpha}$ is continuously
  differentiable for all $t > t_0.$ Further, $F(\cdot)$ is absolutely
  continuous, the corresponding probability density function
  $f(\cdot)$ is bounded, and there exists a constant $c > 0$ such that
  \begin{align}
    \bar{F}(x) - \bar{F}(y) \leq c (y-x) \frac{\bar{F}(x)}{x}
    \label{TAIL-DIFF-BND}
  \end{align}
    for all $y > x \geq t_0$
  \label{SMOOTHNESS-ASSUMP-II}
\end{assumption}
\noindent One sufficient condition for \eqref{TAIL-DIFF-BND} to hold
is that the slowly varying function $L(\cdot)$ in $\bar{F}(x) =
L(x)x^{-\alpha}$ satisfies
\[ L'(t) = o \left( \frac{L(t)}{t} \right) \quad \text{ as } t
\rightarrow \infty.\]

Similar to the analysis of second moment $Z_1(n,b),$ we upper bound
$\E [Z_2^2(n,b)]$ via the following two terms: Let
\begin{align*}
  J_1(n,b) &:= \E \left[ Z_2^2(n,b); \xi_1 \wedge \xi_2 \geq
    \left(a_{_{J_n}}^\eta \wedge \gamma \right) b\right] \text{ and }\\
  J_2(n,b) &:= \E \left[ Z_2^2(n,b); \xi_1 \wedge \xi_2 < \left(
      a_{_{J_n}}^\eta \wedge \gamma \right) b\right]
\end{align*}
for some fixed $\eta$ and $\gamma$ in $(0,1).$
\begin{lemma}
  Under Assumptions \ref{REG-VAR-TAIL-ASSUMP}, \ref{COEFF-ASSUMP-1}
  and \ref{SMOOTHNESS-ASSUMP-II}, 
  \[ \varlimsup_{b \rightarrow \infty} \sup_{n}
  \frac{J_1(n,b)}{\left(p_n\bar{F}(b)\right)^2} = 0\] for every
  $\gamma$ in $(0,1)$ and some $\eta$ in $(0,1).$
  \label{LEM-J1}
\end{lemma}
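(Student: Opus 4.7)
The key observation is that $S_n^{(-j)} = S_{n-1}^{(-j)} + a_n X_n$ for $j<n$, so
\[|\xi_2 - \xi_1| \leq a_n |X_n|.\]
Abbreviating $M := \xi_1 \wedge \xi_2$, Assumption \ref{SMOOTHNESS-ASSUMP-II} then yields the pointwise bound
\[|Z_{2,1}(n,b) - Z_{2,2}(n,b)| \leq c\,\frac{a_n |X_n|}{M}\bar{F}\!\left(\frac{M}{a_{J_n}}\right)\]
on the event $\{M/a_{J_n} \geq t_0\}$, which holds automatically on the favorable event for $b$ large. Squaring and dividing by $q(J_n,n)^2$ then controls $Z_2^2 \mathbb{I}_{\{\xi_1 \wedge \xi_2 \geq (a_{J_n}^\eta \wedge \gamma)b\}}$ pointwise.

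Next I would extract two benefits from the event constraint $M \geq (a_{J_n}^\eta \wedge \gamma)b$. First, $1/M \leq 1/((a_{J_n}^\eta \wedge \gamma)b)$. Second, when $a_{J_n}^\eta \leq \gamma$, we have $M/a_{J_n} \geq a_{J_n}^{\eta-1}b \geq b$, and Potter's bound \eqref{LONG-TAIL-EXT} applied with $v = a_{J_n}^{\eta-1} > 1$ gives
\[\bar{F}\!\left(\frac{M}{a_{J_n}}\right) \leq (1+\delta)\,a_{J_n}^{(1-\eta)(\alpha-\delta)}\,\bar{F}(b),\]
which is a \emph{positive} power of $a_{J_n}$. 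In the complementary regime $a_{J_n}^\eta > \gamma$, $a_{J_n}$ is bounded below by $\gamma^{1/\eta}$ and the crude bound $\bar{F}(M/a_{J_n}) \leq C_\gamma \bar{F}(b)$ suffices.

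Third, I would take expectations. Using the independence of $J_n$ and $X_n$, the fact that $\E X_n^2 < \infty$ (from $\alpha > 2$), and $1/q(j,n)^2 \leq (\sum_i a_i)^2/a_j^2$, conditioning on $J_n=j$, summing in $j$, and combining the two regimes produces a uniform bound of the shape
\[J_1(n,b) \;\leq\; C\,a_n^2\,\frac{\bar{F}^2(b)}{b^2}\sum_{j\geq 1} a_j^{\,2(1-\eta)(\alpha-\delta)\,-\,2\eta\,-\,1}.\]
Choosing $\eta \in (0,1)$ small (and then $\delta$ small enough) so that the exponent $2(1-\eta)(\alpha-\delta)-2\eta-1$ exceeds $1$ — which is possible precisely because $\alpha > 2$ — bounds the sum by $\sum_j a_j < \infty$, uniformly in $n$, by Assumption \ref{COEFF-ASSUMP-1}. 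This leaves $J_1(n,b) \leq C a_n^2 \bar{F}^2(b)/b^2$ uniformly in $n$, and dividing through by $(p_n\bar{F}(b))^2 \geq c_b^2 a_n^2 b^{-2r}\bar{F}^2(b)$ yields the vanishing ratio with $c_b$ bounded away from zero.

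The main obstacle is the careful bookkeeping in step three: the Potter-type gain $a_{J_n}^{(1-\eta)(\alpha-\delta)}$ must be strong enough to dominate both the $a_{J_n}^{-2\eta}$ factor coming from $1/M^2$ and the $a_{J_n}^{-2}$ factor coming from $1/q(J_n,n)^2$, with enough slack left over for summability in $j$. This is exactly where the hypothesis $\alpha > 2$ is invoked, and where $\eta$ and $\gamma$ must be tuned; correctly splitting the expectation according to whether $a_{J_n}^\eta \leq \gamma$ or $a_{J_n}^\eta > \gamma$ and applying the appropriate Potter bound in each regime is the main technical task.
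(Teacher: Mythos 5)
Your argument is correct and is essentially the paper's own proof: both combine the Lipschitz bound \eqref{TAIL-DIFF-BND} with $|\xi_1-\xi_2|\leq a_n|X_n|$, use the restriction $\xi_1\wedge\xi_2\geq(a_{J_n}^\eta\wedge\gamma)b$ to turn $1/(\xi_1\wedge\xi_2)$ and $\bar{F}\bigl((\xi_1\wedge\xi_2)/a_{J_n}\bigr)$ into Potter-controlled powers of $a_{J_n}$ and $b$, invoke $\E X^2<\infty$ together with the summability of $\sum_j a_j$ and $q(j,n)=a_j/\sum_i a_i$, and close with $p_n\geq c_b a_n b^{-r}$. The only cosmetic difference is that you case-split on whether $a_{J_n}^\eta\leq\gamma$, whereas the paper avoids the split by noting $a_{J_n}^\eta\wedge\gamma\geq\gamma a_{J_n}^\eta$ and applying a single polynomial upper bound on $\bar F$; your exponent condition $(1-\eta)(\alpha-\delta)\geq 1+\eta$ is equivalent to the paper's $\nu=2(1-\eta)(\alpha-\delta+1)-4>0$, and, like the paper's display, your closing bound $b^{2(r-1)}$ vanishes only for $r<1$.
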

\begin{proof}
  Observe that $|\xi_1 - \xi_2| \leq a_n|X_n|.$ Therefore, whenever
  both $\xi_1/a_{_{J_n}}$ and $\xi_2/a_{_{J_n}}$ are larger than
  $t_0,$ due to \eqref{TAIL-DIFF-BND},
  \begin{align*}
    Z_2^2(n,b) \leq \frac{c^2}{q^2(J_n,n)}
    \frac{a_n^2X_n^2}{a_{_{J_n}}^2} \frac{a_{_{J_n}}^2}{\left(\xi_1
        \wedge \xi_2\right)^2} \bar{F}^2\left(\frac{\xi_1 \wedge
        \xi_2}{a_{_{J_n}}} \right).
  \end{align*}
  As a consequence, we have for every $n,$
  \begin{align*}
    J_1(n,b) \leq \E \left[ Z_2^2(n,b); \xi_1 \wedge \xi_2 \geq
      \gamma a_{_{J_n}}^\eta b\right] 
    &\leq c^2a_n^2 \E \left[X_n^2\right] \E \left[
      \frac{1}{q^2(J_n,n)a_{_{J_n}}^2}
      \frac{a_{_{J_n}}^{2(1-\eta)}}{\gamma^2 b^2} \bar{F}^2\left(
        \frac{\gamma b}{a_{_{J_n}}^{1-\eta}} \right) \right].
  \end{align*}
  Then given $\delta > 0,$ for large values of $b,$ due to
  \eqref{LONG-TAIL-EXT},
  \[ \bar{F}\left( \frac{\gamma b}{a_{_{J_n}}^{1-\eta}} \right) \leq
  (1+\delta) \left( \frac{a_{_{J_n}}^{1-\eta}}{\gamma b}
  \right)^{\alpha-\delta}.\] Further, since $q(j,n) =
  a_j/\sum_{i=1}^na_i,$
  \begin{align*}
    J_1(n,b) &\leq (1+\delta)^2
    \frac{c^2a_n^2}{\gamma^{2(\alpha-\delta+1)}}
    \left(\sum_{i=1}^na_i\right)^2\E \left[X_n^2\right] \E \left[
      \frac{a_{_{J_n}}^{\nu}}{b^{2(\alpha-\delta+1)}} \right],
  \end{align*}
  where $\nu := 2(1-\eta)(\alpha-\delta+1)-4.$ If we choose $\eta <
  (\alpha-\delta-1)/(\alpha-\delta+1),$ then $\nu$ is
  positive. Additionally, since $p_n \geq c_b a_nb^{-r}$ (for some $r
  < 1),$
  \begin{align*}
    \sup_{n} \frac{J_1(n,b)}{\left(p_n \bar{F}(b)\right)^2} \leq
    (1+\delta)^2 \frac{c^2}{c_b^2\gamma^{2(\alpha-\delta+1)}}
    \left(\sum_{i=1}^\infty a_i\right)^2 \E\left[X^2\right]
    \frac{b^{-2(\alpha-\delta-r+1)}}{\bar{F}^2(b)}.
  \end{align*} 
  As $\bar{F}(x) \geq (1-\delta)x^{-\alpha-\delta}$ for large values
  of $x,$ it follows that
  \begin{align*}
    \varlimsup_{b \rightarrow \infty} \sup_{n} \frac{J_1(n,b)}{p_n^2
      \bar{F}^2(b)} = 0
  \end{align*}
  for any $\delta$ smaller than $(1-r)/2,$ and this proves the claim.
\end{proof}

\noindent For the analysis of $J_2(n,b),$ we define
\[\kappa := \sup \left\{k: \varlimsup_n n^ka_n < \infty \right\}\] and 
separately analyse the cases $\kappa < \infty$ and $\kappa = \infty.$
If $a_n$ is, for example, polynomially decaying with respect to $n,$
then $\kappa$ happens to be finite. Whereas if $a_n$ is exponentially
decaying with respect to $n,$ then $\kappa$ is infinite. The analysis
for the two cases differ, and are presented below in Lemmas
\ref{LEM-J2-I} and \ref{LEM-J2-II}.
\begin{lemma}
  If $\kappa = \infty,$ then under Assumptions
  \ref{REG-VAR-TAIL-ASSUMP}, \ref{COEFF-ASSUMP-1} and
  \ref{SMOOTHNESS-ASSUMP-II},
  \[ \varlimsup_{b \rightarrow \infty} \sup_{n}
  \frac{J_2(n,b)}{\left(n^{\frac{2}{\eta}}p_n\bar{F}(b)\right)^2} =
  0\] for some $\gamma$ in $(0,1)$ and every $\eta$ in $(0,1).$
  \label{LEM-J2-I}
\end{lemma}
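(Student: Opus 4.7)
The plan is to upper-bound $J_2(n,b)$ by first disintegrating over the auxiliary randomization $J_n$, then using the crude estimate $|Z_{2,1}(n,b) - Z_{2,2}(n,b)| \leq 1$ (so that $Z_2^2(n,b) \leq 1/q^2(J_n,n)$), and finally controlling the residual probability via Proposition \ref{PROP-RES-PROB}. This leads immediately to
\[ J_2(n,b) \leq \sum_{j=1}^{n-1} \frac{1}{q(j,n)} \Pr\bigl\{ \xi_1 \wedge \xi_2 < (a_j^\eta \wedge \gamma) b \,\bigm|\, J_n = j \bigr\}. \]
Since $\xi_i \geq M_n^{(-j)}$ and $\xi_i \geq b - S^{(-j)}$ whenever the latter is non-negative, the event inside the probability forces both $M_n^{(-j)} < \gamma b$ and $\max\{S_n^{(-j)}, S_{n-1}^{(-j)}\} > (1 - (a_j^\eta \wedge \gamma))b$, putting us squarely in the regime handled by Proposition \ref{PROP-RES-PROB}.

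Next I would split the sum by the size of $a_j$. For the finitely many indices with $a_j^\eta \geq \gamma$, I apply Proposition \ref{PROP-RES-PROB} with the fixed choice $k = (1-\gamma)/\gamma$, which can be made strictly larger than $2$ by taking $\gamma < 1/3$; this yields a probability bound of order $\bar{F}(b)^k$, whose contribution to the normalized ratio vanishes as $b \to \infty$. For the remaining indices with $a_j^\eta < \gamma$, the stronger constraint $M_n^{(-j)} < a_j^\eta b$ is available inside the event, and I invoke Proposition \ref{PROP-RES-PROB} with the $j$-dependent choice $k(j) := (1-a_j^\eta)/a_j^\eta$. Combined with Potter's bound \eqref{LONG-TAIL-EXT} to compare $\bar{F}(a_j^\eta b)$ with $\bar{F}(b)$, this produces a probability bound that is super-exponentially small in $1/a_j^\eta$.

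After dividing by $(n^{2/\eta} p_n \bar{F}(b))^2$ and using $p_n \geq c_b a_n^\alpha$, the normalized sum splits into a bounded piece (from the finitely many large-$a_j$ indices, with the factor $\bar{F}(b)^{k-2} \to 0$) and a tail piece whose summand is $1/a_j$ times the super-exponentially small factor just derived. Under $\kappa = \infty$ the tail of $(a_j)$ decays faster than any polynomial in $j$, so the denominator $n^{4/\eta} a_n^{2\alpha}$ is itself super-polynomially small; nevertheless, the super-exponential smallness of the probability estimate still dominates, giving a uniform-in-$n$ bound that vanishes as $b \to \infty$.

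The principal obstacle is calibrating this trade-off: the inflation factor $\sum_{j=1}^{n-1} 1/q(j,n) = (\sum_i a_i) \sum_{j=1}^{n-1} 1/a_j$ grows super-polynomially in $n$ under $\kappa = \infty$, while the normalizer contains $a_n^{2\alpha}$, which decays super-polynomially. The $j$-dependent choice $k(j) = (1-a_j^\eta)/a_j^\eta$ in Proposition \ref{PROP-RES-PROB} is essential, because only a super-exponential gain in $1/a_j^\eta$ is strong enough to defeat both the growth of $1/a_j$ and the shrinkage of $a_n^{2\alpha}$. The freedom to select $\eta$ small, which enlarges the normalizer by the factor $n^{2/\eta}$ at the cost of a looser threshold $a_j^\eta b$, provides precisely the slack needed to carry this balance through uniformly in $n$.
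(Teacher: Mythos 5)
Your proposal departs from the paper's proof at the very first step, and that departure introduces a fatal gap. The paper does \emph{not} use the crude bound $|Z_{2,1}-Z_{2,2}|\leq 1$; it applies the mean value theorem together with the boundedness of $f$ from Assumption \ref{SMOOTHNESS-ASSUMP-II} and the observation $|\xi_1-\xi_2|\leq a_n|X_n|$ to get the extra factor $a_n^2 X_n^2 / a_{J_n}^2$ inside the expectation, then H\"older's inequality, and then the key deterministic inequality $\xi_1\wedge\xi_2\geq b/n$ (which forces $1/a_{J_n}^{\eta}\leq b/(\xi_1\wedge\xi_2)\leq n$ on the event) to bound the dangerous $a_{J_n}^{-4q}$ term by $n^{4q/\eta}$. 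Two things are essential here and your proposal lacks both. First, the $a_n^2$ factor from the MVT cancels the $a_n^2$ in $p_n^2\geq c_b^2 a_n^2 b^{-2r}$, so the normalizer carries no residual $a_n$-dependence. Your crude bound forfeits this cancellation: once you normalize by $p_n^2\geq c_b^2a_n^{2\alpha}$ you are fighting a factor $a_n^{-2\alpha}$ that grows super-polynomially in $n$ when $\kappa=\infty$; for fixed $b$ and any fixed small index $j$ (where the conditional probability $\Pr\{\xi_1\wedge\xi_2<(a_j^\eta\wedge\gamma)b\mid J_n=j\}$ converges to a positive constant as $n\to\infty$), the contribution of that single $j$ to your normalized sum is of order $a_n^{-2\alpha}n^{-4/\eta}\to\infty$ as $n\to\infty$, so the supremum over $n$ is not even finite, let alone vanishing in $b$. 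Second, the $n^{2/\eta}$ in the normalizer is precisely what absorbs $\xi_1\wedge\xi_2\geq b/n$ through H\"older; in your sketch the $n^{2/\eta}$ factor is invoked only as generic ``slack'' and never actually enters a computation.

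The other independent gap is your use of Proposition \ref{PROP-RES-PROB} with a $j$-dependent level $k(j)=(1-a_j^\eta)/a_j^\eta\to\infty$. The proposition is an asymptotic statement with an $o(1)$ in the exponent that is ``as $b\to\infty$ for fixed $k$'', and its proof hinges on $\theta_b\to 0$; for $k$ growing without bound one has $\theta_b\sim k\log k/b$, which does not vanish uniformly, and the steps $\theta_b^2\sum_ia_i^2=o(\theta_b)$ and $\sum_i\bar{F}(2\alpha/(\theta_b a_i))=o(\theta_b)$ are no longer justified. The paper deliberately applies the proposition only with the single fixed level $k=(1-\gamma)/\gamma$ (see \eqref{INTER-J2-RES-PROB}), precisely to avoid needing a uniform-in-$k$ version, and handles the smallness of $a_{J_n}$ via the $\xi_1\wedge\xi_2\geq b/n$ trick instead.
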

\begin{proof}
  Due to mean value theorem, 
  \begin{align*}
    Z_2(n,b) =
    \frac{1}{q(J_n,n)}\frac{\xi_1-\xi_2}{a_{_{J_n}}}f\left(
      \frac{\zeta}{a_{_{J_n}}}\right)
  \end{align*}
  for some $\zeta$ between $\xi_1$ and $\xi_2.$ Here recall that
  $f(\cdot)$ is the probability density corresponding to the
  distribution $F(\cdot).$ Since $|\xi_1-\xi_2| \leq a_n |X_n|,$ it
  follows from the definition of $J_2(n,b)$ that
  \begin{align*}
    J_2(n,b) \leq \E \left[ \frac{1}{q^2(J_n,n)}
      \frac{a_n^2X_n^2}{a_{_{J_n}}^2}
      f^2\left(\frac{\zeta}{a_{_{J_n}}}\right); \xi_1 \wedge \xi_2 <
      \left(a_{_{J_n}}^\eta \wedge \gamma\right) \right].
  \end{align*}
  Recall that $q(j,n) = a_j/\sum_{i=1}^na_i.$ Then, due to
  H\"{o}lder's inequality,
  \begin{align}
    \frac{J_2(n,b)}{a_n^2} &\leq \left(\sum_{i=1}^na_i\right)^2 \E
    \left[ X_n^{2p}
      f^{2p}\left(\frac{\zeta}{a_{_{J_n}}}\right)\right]^{\frac{1}{p}}
    \E \left[\frac{1}{a_{_{J_n}}^{4q}};\xi_1 \wedge \xi_2 <
      \left(a_{_{J_n}}^\eta \wedge \gamma\right) b
    \right]^{\frac{1}{q}}
    \label{INTER-J2}
  \end{align}
  for some $p,q > 1$ satisfying $p^{-1}+q^{-1}=1$ and $\E[X^{2p}] <
  \infty.$ See that, as in the proof of Lemma \ref{LEM-I2}, $\xi_1
  \wedge \xi_2$ is at least $b/n.$ Therefore,
    \begin{align*}
      \E \left[\frac{1}{a_{_{J_n}}^{4q}};\xi_1 \wedge \xi_2 <
        \left(a_{_{J_n}}^\eta \wedge \gamma\right) b \right] &= \E
      \left[\left(\frac{b}{\xi_1 \wedge
            \xi_2}\right)^{\frac{4q}{\eta}};\xi_1 \wedge \xi_2 <
        \left(a_{_{J_n}}^\eta \wedge \gamma\right) b\right]\\
      &\leq n^{\frac{4q}{\eta}} \Pr \left\{ \xi_1 \wedge \xi_2 <
        \left(a_{_{J_n}}^\eta \wedge \gamma\right) b \right\}.
    \end{align*}
    From the definition of $\xi_1$ and $\xi_2,$ it is immediate that
    for every $n,$
   \begin{align}
     \Pr \left\{ \xi_1 \wedge \xi_2 < \left(a_{_{J_n}}^\eta \wedge
         \gamma\right) b \left|\frac{}{}\right. J_n \right\} &\leq \Pr
     \left\{ S_n^{(-J_n)} \vee S_{n-1}^{(-J_n)} > (1-\gamma)b,
       M_n^{(-J_n)} \leq \gamma b \left|\frac{}{}\right. J_n \right\}
     \nonumber\\
     &\leq c_\gamma \bar{F}^{\frac{1-\gamma}{\gamma}}(b)
     \label{INTER-J2-RES-PROB}
    \end{align}
    for some constant $c_\gamma$ and all $b$ large enough, because of
    union bound and Proposition \ref{PROP-RES-PROB}. Further, recall
    that $p_n \geq c_b a_n b^{-r}, \E[X^{2p}]$ is finite, and
    $f(\cdot)$ is bounded. These observations, in conjunction with
    \eqref{INTER-J2}, result in
    \begin{align*}
      \sup_{n \geq
        1}\frac{J_2(n,b)}{\left(n^{\frac{2}{\eta}}p_n\bar{F}(b)\right)^2}
      = O \left( \frac{b^{2r}\bar{F}^{\frac{1-\gamma}{\gamma
              q}}(b)}{\bar{F}^2(b)}\right), \text{ as } b \rightarrow
      \infty.
    \end{align*}
    Given $r < 1$ and $q,$ one can choose $\gamma$ suitably so that
    $b^{2r}\bar{F}^{\frac{1-\gamma}{\gamma q}}(b)$ vanishes as $b
    \rightarrow \infty.$ This proves the claim.
\end{proof}
\begin{lemma}
  If $\kappa < \infty,$ then under Assumptions
  \ref{REG-VAR-TAIL-ASSUMP}, \ref{COEFF-ASSUMP-1} and
  \ref{SMOOTHNESS-ASSUMP-II},
  \[ \varlimsup_{b \rightarrow \infty} \sup_{n}
  \frac{J_2(n,b)}{\left(p_n\bar{F}(b)\right)^2} = 0\] for some
  $\gamma$ in $(0,1)$ and every $\eta$ in $(0,1).$
  \label{LEM-J2-II}
\end{lemma}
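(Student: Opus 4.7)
The plan is to mirror the proof of Lemma \ref{LEM-J2-I} but to replace the step that produced the unwanted $n^{2/\eta}$ factor by a finer summation that exploits the polynomial upper bound $a_n \leq C_{\kappa'} n^{-\kappa'}$ (valid for every $\kappa' < \kappa$) available when $\kappa < \infty$. The key quantity to analyze is
\[
E_n(b) := \E\bigl[a_{J_n}^{-4q};\ \xi_1\wedge\xi_2 < (a_{J_n}^\eta\wedge\gamma)b\bigr].
\]
As in Lemma \ref{LEM-J2-I}, the mean-value bound $|Z_2|^2 \leq c_f^2 a_n^2 X_n^2/(q^2(J_n,n)a_{J_n}^2)$ together with H\"older's inequality (with $p < \alpha/2$ and $q = p/(p-1)$) gives
\[
J_2(n,b) \leq c_f^2 a_n^2 \Bigl(\sum_i a_i\Bigr)^{\!2} (\E X^{2p})^{1/p}\, E_n(b)^{1/q}.
\]
Since $p_n \geq c_b a_n/b^r$, the $a_n^2$ factor cancels against $p_n^{-2}$, so it suffices to verify that $b^{2r} E_n(b)^{1/q} = o(\bar F^2(b))$ uniformly in $n$.

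To bound $E_n(b)$ without the $n^{2/\eta}$ penalty, I would expand
\[
E_n(b) = \sum_{j=1}^{n-1} \frac{\Pr\{\xi_1\wedge\xi_2 < (a_j^\eta\wedge\gamma)b\}}{a_j^{4q-1}\sum_i a_i}
\]
and control the summands in three regimes of $a_j$. The pigeonhole bound $\xi_1\wedge\xi_2 \geq b/n$ (as in the proof of Lemma \ref{LEM-J2-I}) makes the event vacuous whenever $a_j \leq n^{-1/\eta}$, so the sum runs over $j$'s with $a_j > n^{-1/\eta}$; combined with $a_j \leq C_{\kappa'} j^{-\kappa'}$ this restricts the range to $j \leq C'' n^{1/(\eta\kappa')}$. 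For the finitely many $j$'s with $a_j^\eta \geq \gamma$ I would apply Proposition \ref{PROP-RES-PROB} with $k = (1-\gamma)/\gamma$ to get $\Pr\{\cdot\} \leq C_\gamma \bar F(b)^{(1-\gamma)/\gamma}$. For the remaining $j$'s ($n^{-1/\eta} < a_j < \gamma^{1/\eta}$) I would apply Proposition \ref{PROP-RES-PROB} with $k = (1-a_j^\eta)/a_j^\eta$ and Potter's bounds \eqref{LONG-TAIL-EXT}: whenever $a_j^\eta$ exceeds a $b$-dependent threshold $\theta_b^\star \asymp \bar F(b)^{1/(\alpha-1-\delta)}$, the probability bound decays super-exponentially in $1/a_j^\eta$, easily swallowing the polynomial blow-up of $a_j^{-(4q-1)}$; for the narrow band $a_j^\eta \in (n^{-1}, \theta_b^\star)$ I would use a cruder $\bar F(b)$-level bound compensated by the fact that the polynomial control $a_j \leq C_{\kappa'} j^{-\kappa'}$ restricts this band to only polynomially many $j$'s (in $b$).

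Summing the three contributions produces a bound $E_n(b) \leq C\, \bar F(b)^{(1-\gamma)/\gamma}\cdot b^{P(1/\eta,1/\kappa')}$ with $P$ a fixed exponent depending only on $\eta,\kappa',\alpha,q$, \emph{uniformly in $n$}. Substituting back and using $\bar F(b)^{-2} \sim b^{2\alpha}$,
\[
\sup_{n}\frac{J_2(n,b)}{(p_n\bar F(b))^2} \leq C\, b^{2r + P(1/\eta,1/\kappa')/q}\,\bar F(b)^{(1-\gamma)/(\gamma q) - 2},
\]
which tends to $0$ as $b\to\infty$ provided $\gamma$ is chosen small enough (depending on $r,\alpha,\kappa'$).

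The main obstacle is the middle regime above: precisely calibrating the $b$-dependent threshold $\theta_b^\star$ so that the two Proposition \ref{PROP-RES-PROB}-based bounds connect cleanly, and then verifying that the polynomially many "crude-bound" summands between $a_j^\eta = n^{-1}$ and $a_j^\eta = \theta_b^\star$ — the indices where Proposition \ref{PROP-RES-PROB} is ineffective — produce only a polynomial-in-$b$ factor that can be absorbed by a single power of $\bar F(b)$. Throughout, $\kappa < \infty$ is essential, as it is what converts the restriction $a_j > n^{-1/\eta}$ into the polynomial $j$-range $j \leq C'' n^{1/(\eta\kappa')}$ and keeps the bookkeeping uniform in $n$.
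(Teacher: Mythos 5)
Your plan diverges from the paper at the very point you flag as the ``main obstacle,'' and that divergence is where the argument breaks. You propose to apply Proposition \ref{PROP-RES-PROB} with the $j$-dependent parameter $k=(1-a_j^\eta)/a_j^\eta$, which ranges over an unbounded set as $a_j\to 0$. But the proposition's error term is an $o(1)$ as $b\to\infty$ for each \emph{fixed} $k$: its proof sets $\theta_b=-\tfrac{k}{b}\log\bigl(\sum_i a_i^\alpha\,\bar F(b/k)/k\bigr)$ and uses $\theta_b^2=o(\theta_b)$ and $\sum_i\bar F(2\alpha/(\theta_b a_i))=o(\theta_b)$, which both require $\theta_b\to 0$; for $k$ growing with $j$ (and so with $n$), $\theta_b$ need not be small at any fixed $b$, so the stated bound is simply not available uniformly over the $j$'s you need. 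A second, compounding problem is in the ``narrow band'' $a_j^\eta\in(n^{-1},\theta_b^\star)$: to claim it contains only polynomially many $j$'s and to control $a_j^{-(4q-1)}$ there, you need a \emph{lower} bound $a_j\gtrsim j^{-k}$, not the upper bound $a_j\le C_{\kappa'}j^{-\kappa'}$ you invoke (an upper bound only tells you $a_j>n^{-1/\eta}$ forces $j\lesssim n^{1/(\eta\kappa')}$, but says nothing about how many such $j$ have $a_j<\theta_b^{\star 1/\eta}$ nor caps $a_j^{-(4q-1)}$ on that band). As written, the narrow-band contribution is $n$-dependent and the claimed uniformity in $n$ is not established.

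The paper sidesteps all of this by never touching Proposition \ref{PROP-RES-PROB} with a varying $k$. It keeps the event conditionally and observes that
\[
\Pr\bigl\{\xi_1\wedge\xi_2<(a_{J_n}^\eta\wedge\gamma)b\,\big|\,J_n\bigr\}\ \le\ \Pr\bigl\{M_n^{(-J_n)}<a_{J_n}^\eta b\,\big|\,J_n\bigr\}\ =\ \prod_{i\ne J_n}F\!\left(\tfrac{a_{J_n}^\eta b}{a_i}\right),
\]
and then uses the lower bound $a_i\ge \tilde c_k\,i^{-k}$ (for a fixed $k>\kappa$, which is precisely what $\kappa<\infty$ supplies) to bound this product by $F(1)^{(\tilde c_k/(a_{J_n}^\eta b))^{1/k}-2}$: a super-exponential factor in $a_{J_n}^{-\eta/k}$ that absorbs $a_{J_n}^{-4q}$ outright, with no regime splitting and no $j$-counting. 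Combining the square root of that bound with the square root of the single fixed-$\gamma$ application of Proposition \ref{PROP-RES-PROB} (exactly the one already used in Lemma \ref{LEM-J2-I}) yields the factor $\bar F^{(1-\gamma)/(2\gamma)}(b)$ that drives the rate, and then the rest of the proof runs as in Lemma \ref{LEM-J2-I}. If you want to salvage your sum-over-$j$ decomposition, the fix is to replace the Proposition-with-varying-$k$ step by exactly this product bound with fixed $k>\kappa$; then the summand $a_j^{-(4q-1)}F(1)^{(\tilde c_k/(a_j^\eta b))^{1/k}-2}$ is bounded uniformly in $j$ and $b$, and the bookkeeping trivializes.
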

\begin{proof}
  Observe that the argument leading to \eqref{INTER-J2} in the proof
  of Lemma \ref{LEM-J2-I} holds irrespective of whether $\kappa$ is
  finite or not. To proceed further, see that
  \begin{align}
    \E \left[\frac{1}{a_{_{J_n}}^{4q}};\xi_1 \wedge \xi_2 <
      \left(a_{_{J_n}}^\eta \wedge \gamma\right) b \right] = \E \left[
      \frac{1}{a_{_{J_n}}^{4q}} \Pr \left\{ \xi_1 \wedge \xi_2 <
        \left(a_{_{J_n}}^\eta \wedge \gamma\right) b \
        \left|\frac{}{}\right. \ J_n \right\} \right].
    \label{J2-INTER-II}
  \end{align}
  It follows from the definition of $\xi_1$ and $\xi_2$ that
  \begin{align*}
    \Pr \left\{ \xi_1 \wedge \xi_2 < \left(a_{_{J_n}}^\eta \wedge
        \gamma\right) b \ \left|\frac{}{}\right. \ J_n \right\} &\leq
    \Pr \left\{ M_n^{(-J_n)} < a_{_{J_n}}^\eta b \
      \left|\frac{}{}\right. \ J_n \right\} = \prod_{i=1, i \neq
      J_n}^n F\left( \frac{ a_{_{J_n}}^\eta b}{a_i} \right).
  \end{align*}
  For any fixed $k > \kappa,$ there exists a positive constant
  $\tilde{c}_k$ such that $n^ka_n \geq \tilde{c}_k$ for all $n.$ Then
  \begin{align*}
    \Pr \left\{ \xi_1 \wedge \xi_2 < \left(a_{_{J_n}}^\eta \wedge
        \gamma\right) b \ \left|\frac{}{}\right. \ J_n \right\} &\leq
    \prod_{i=1, i \neq J_n}^n F\left( \frac{ i^ka_{_{J_n}}^\eta
        b}{\tilde{c}_k} \right) \leq F(1)^{
      \left(\frac{\tilde{c}_k}{a_{_{J_n}}^\eta
          b}\right)^{\frac{1}{k}}-2},
  \end{align*}
  where we have simply excluded the last $n-\lceil
  (\tilde{c}_k/(a_{_{J_n}}^\eta b))^{1/k} \rceil$ terms in the product
  to get an upper bound. This inequality, along with
  \eqref{INTER-J2-RES-PROB}, results in the following loose bound
  which is enough for our purposes:
  \begin{align*}
    \Pr \left\{ \xi_1 \wedge \xi_2 < \left(a_{_{J_n}}^\eta \wedge
        \gamma\right) b \ \left|\frac{}{}\right. \ J_n \right\} \leq c
    F(1)^{{\frac{1}{2} \left( \frac{\tilde{c}_k}{a_{_{J_n}}^\eta
            b}\right)^{\frac{1}{k}}}-1}
    \bar{F}^{\frac{1-\gamma}{2\gamma}}(b),
  \end{align*}
  for some constant $c > 0.$ Using this in \eqref{J2-INTER-II}, we
  have that
  \begin{align*}
    \E \left[\frac{1}{a_{_{J_n}}^{4q}};\xi_1 \wedge \xi_2 <
      \left(a_{_{J_n}}^\eta \wedge \gamma\right) b \right] \leq c
    b^{\frac{4q}{\eta}} \E \left[ \frac{1}{\left(a_{_{J_n}}^\eta b
        \right)^{\frac{4q}{\eta}}} F(1)^{{\frac{1}{2} \left(
            \frac{\tilde{c}_k}{a_{_{J_n}}^\eta
              b}\right)^{\frac{1}{k}}}-1}\right]
    \bar{F}^{\frac{1-\gamma}{2\gamma}}(b)
  \end{align*}
  Since $x^{\frac{4qk}{\eta}}F(1)^{x-1}$ is bounded for positive
  values of $x,$ the expectation term in the right hand side of the
  above equation is finite. Further, $p_n \geq c_b a_n b^{-r}$. As a
  consequence, we have from \eqref{INTER-J2} that
  \begin{align*}
    \sup_{n \geq 1}\frac{J_2(n,b)}{\left(p_n\bar{F}(b)\right)^2} =
    O\left( b^{\frac{4}{\eta} + 2r}
      \frac{\bar{F}^{\frac{1-\gamma}{2q\gamma}}(b)}{\bar{F}^2(b)}
    \right),
  \end{align*}
  which, for suitably chosen $\gamma,$ vanishes to $0$ as $b \rightarrow
  \infty.$ This concludes the proof.
\end{proof}

\noindent Since $\E [Z_2^2(n,b)]$ is the sum of $J_1(n,b)$ and
$J_2(n,b),$ when $\kappa = \infty,$ due to Lemmas \ref{LEM-J1} and
\ref{LEM-J2-I}, one can choose $\eta$ and $\gamma$ in $(0,1)$ such
that
\begin{align}
  \label{EFF-Z2-I}
  \varlimsup_{b \rightarrow \infty} \sup_{n \geq 1} \frac{\E \left[
      Z_2^2(n,b)\right]}{\left( n^{\frac{2}{\eta}}
      p_n\bar{F}(b)\right)^2} = 0.
\end{align}
Similarly, when $\kappa < \infty,$ due to Lemmas \ref{LEM-J1} and
\ref{LEM-J2-II},
\begin{align}
  \label{EFF-Z2-II}
  \varlimsup_{b \rightarrow \infty} \sup_{n \geq 1} \frac{\E \left[
      Z_2^2(n,b)\right]}{\left(p_n\bar{F}(b)\right)^2} = 0.
\end{align}

\subsection{Proof of Theorem \ref{THM-EFF-OVRALL}} 
Recall that
\[ Z(b) = \frac{\Zl(N,b)}{p_{_N}} = \frac{Z_1(N,b) +
  Z_2(N,b)}{p_{_N}}.\] 
Therefore,
\begin{align*}
\frac{\E\left[Z^2(b)\right]}{\bar{F}^2(b)} =
  \E\left[\frac{Z_1^2(N,b)}{p_{_N}^2\bar{F}^2(b)}\right]
  + \E\left[ \frac{Z_2^2(N,b)}{p_{_N}^2\bar{F}^2(b)}\right] +
\E\left[\frac{Z_1(N,b)}{p_{_N}\bar{F}(b)} \right]
  \E\left[\frac{Z_2(N,b)}{p_{_N}\bar{F}(b)} \right].
\end{align*}
Then due to Jensen's inequality,
\begin{align}
  \frac{\E\left[Z^2(b)\right]}{ \bar{F}^2(b)} \leq
  \E\left[ \frac{Z_1^2(N,b)}{p_{_N}^2\bar{F}^2(b)}\right]
  + \E\left[\frac{Z_2^2(N,b)}{p_{_N}^2\bar{F}^2(b)}\right] +
  \sqrt{\E\left[ \frac{Z_1^2(N,b)}{p_{_N}^2\bar{F}^2(b)}\right]}
  \sqrt{\E\left[ \frac{Z_2^2(N,b)}{p_{_N}^2\bar{F}^2(b)}\right]}.
  \label{INTER}
\end{align}
Now consider, for example, the first term in the right hand side of
the above inequality. Due to the uniform convergence result on $\E
[Z_1^2(n,b)]$ in \eqref{EFF-Z1}, there exists a constant $c_1$ such
that 
\begin{align*}
  \frac{\E\left[Z_1^2(N,b)\ |\
      N\right]}{\left(p_{_N}\bar{F}(b)\right)^2} \leq
  c_1(1+\delta)^2p_{_N}^{-2\delta} \left( \sum_n
    a_n^\alpha\right)^{2(1-\delta)}
\end{align*}
for every $\delta$ and $b.$ Since $\sum_n na_n$ exists, $\E
p_{_N}^{-2\delta} < \infty$ for all $\delta$ small enough. As $\delta$
can be arbitrarily small, due to reverse Fatou's lemma, it follows
from \eqref{EFF-Z1} that
\begin{align}
\label{TERM-1}
\varlimsup_{b \rightarrow \infty}
\E\left[\frac{Z_1^2(N,b)}{p_{_N}^2\bar{F}^2(b)}\right] \leq 
\E \left[ \varlimsup_{b \rightarrow \infty} \frac{\E\left[Z_1^2(N,b)\
      |\ N\right]}{\left(p_{_N}\bar{F}(b)\right)^2} \right] \leq
\left(\sum_n a_n^\alpha \right)^2.
\end{align}
Similarly, one can conclude from \eqref{EFF-Z2-I} and
\eqref{EFF-Z2-II} that for every $b,$
\begin{align*}
  \frac{\E\left[Z_2^2(N,b)\ |\
      N\right]}{\left(p_{_N}\bar{F}(b)\right)^2} \leq 
\begin{cases}
  c_2 N^{\frac{4}{\eta}} & \text{if } \kappa = \infty \\
  c_2 & \text{if } \kappa < \infty.
\end{cases}
\end{align*}
for some constant $c_2$. Observe that $\E N^{\frac{4}{\eta}} < \infty$
for any fixed $\eta$ because when $\kappa = \infty,$ $p_n$ is
exponentially decaying with respect to $n.$ Then as a consequence of
\eqref{EFF-Z2-I} and \eqref{EFF-Z2-II}, due to dominated convergence,
\begin{align*}
  \lim_{b \rightarrow \infty}
  \E\left[ \frac{Z_2^2(N,b)}{p_{_N}^2\bar{F}^2(b)} \right] =
  \E\left[ \lim_{b \rightarrow \infty} \frac{\E\left[Z_2^2(N,b)\ |\
        N\right]}{\left(p_{_N}\bar{F}(b)\right)^2}\right] = 0.
\end{align*}
This conclusion, along with \eqref{INTER} and \eqref{TERM-1}, results
in
\begin{align*}
  \lim_{b \rightarrow \infty}
  \frac{\E\left[Z^2(b)\right]}{\bar{F}^2(b)} \leq
  \left(\sum_n a_n^\alpha\right)^2.
\end{align*}
Further, $\Pr \{ S > b\} \sim \sum_n a_n^\alpha \bar{F}(b)$ as $b
\rightarrow \infty.$ Therefore,
\begin{align*}
  \varlimsup_{b \rightarrow \infty} \frac{\E\left[Z^2(b)\right]}{\Pr\{
    S > b\}^2} \leq 1.
\end{align*}
Additionally, since $Z(b)$ is an unbiased estimator of $ \Pr\{S >
b\},$ $\E[Z^2(b)]$ must be larger than $\Pr \{ S > b\}^2$ because of
Jensen's inequality.  This proves the theorem. \hfill{$\Box$}

\subsection{A note on computational complexity of the simulation
  procedure}  
\label{SEC-COMP-CPXTY-CHAP5}
Given $b > 0,$ our objective has been to devise an algorithm that
returns a number in the interval
$((1-\epsilon)\Pr\{S>b\},(1+\epsilon)\Pr\{S>b\})$ with probability at
least $1-\delta.$ In Section \ref{SEC-SIM-METH-CHAP5}, we proposed to
take average of values returned by several runs of Algorithm
\ref{ALGO-OVRALL} as the estimate of $\Pr\{S > b\}.$ Assuming that
tasks like performing basic arithmetic operations, generating uniform
random numbers, evaluating $F(x)$ at specified $x$, all require unit
computational effort, it is immediate that each call to the procedure
\textsc{LocalSimulation}$(n,b)$ expends at most $C n$ computational
effort, for some positive constant $C,$ irrespective of the value of
$b.$ Given $b > 0,$ if one makes $N_b$ calls to Algorithm
\ref{ALGO-OVRALL} and returns the average of returned values of $Z(b)$
as the overall estimate, then
\begin{itemize}
\item[1)] the estimate lies within the desired interval with
  probability at least $\epsilon^{-2} \textnormal{CV}^2[Z(b)]/N_b,$
  where $\textnormal{CV}[Z_b] = \textnormal{Var}[Z_b]/\E[Z_b]^2$ is
  the coefficient of variation of $Z_b,$ and
\item[2)] the overall computational effort is at most $C N N_b,$ where
  $N$ is the auxiliary random variable drawn according to the
  probability mass function $(p_n: n \geq 1)$ in Algorithm
  \ref{ALGO-OVRALL}.
\end{itemize}
Due to Theorem \ref{THM-EFF-OVRALL}, we have that
$\textnormal{CV}[Z(b)] = o(1),$ as $b \rightarrow \infty.$ Therefore, it
is enough to choose $N_b = c \epsilon^{-2}\delta^{-1}$ for some
positive constant $c.$ Further, note that
\begin{align*}
  \E [N] = \sum_n n p_n = c_b \sum_n n \left( a_n^\alpha +
    \frac{a_n}{b^r}\right).
\end{align*}
First, observe that $\sum_n a_n < \infty$ because of Assumption
\ref{COEFF-ASSUMP-1}. Additionally, since $c_b \sim \sum_n a_n^\alpha$
as $b \rightarrow \infty,$ we have $\E N = O(1)$ as $b \rightarrow \infty.$
Therefore, the overall computational effort is just $O(1)$ as $b
\rightarrow \infty.$ Thus, despite the difficulties that the definition
of $S$ involves infinitely many random variables and $\Pr\{S > b\}$ is
arbitrarily small for large values of $b,$ our work establishes that
one can compute $\Pr\{S > b\}$ without any bias by expending only a
computational effort that is uniformly bounded in $b.$

\section{A numerical example}
\label{SEC-NUM-EG-CHAP3}
In this section, we present the results of a numerical simulation
experiment that demonstrates the efficiency of our estimator. Take
$(X_n: n \geq 1)$ to be iid copies of a Pareto random variable $X$
satisfying $\Pr\{ X > x\} = 1 \wedge x^{-4}.$ Additionally, take
$a_n = 0.9^n$ and let $S = \sum_n a_n X_n.$ We use $N = 10,000$
simulation runs to estimate $\Pr \{ S > b \}$ for various values of
$b$ listed in Table \ref{TAB-NUM-RES-CHAP5}. The parameter $r$ in the
choice of probabilities $p_n$ in the expression
\ref{PROB-CHOICE-OUTER-RANDOMIZATION} is taken to be 1. The values
listed in Column 3 correspond to the estimate obtained from 10,000
runs of our simulation algorithm. It is instructive to compare the
simulation estimates in Column 3 with the crude asymptotic
$\bar{F}(b) \sum_n a_n^\alpha$ listed in Column 2. The empirically
observed coefficient of variation of our simulation estimators is
listed in Column 5. Although it is required in the proof of Theorem
\ref{THM-EFF-OVRALL} that $r > 1,$ it can be inferred from Column 5
that the choice $r = 1$ yields estimators that have coefficient of
variation that decreases to 0 as $b$ is increased.

\begin{table}[htb!]
  \centering
  \caption{Numerical result for the simulation of $\Pr\{ S > b\}$- here
    CV denotes the empirically observed coefficient of variation based
    on 10,000 simulation runs}
  \begin{tabular}{l c p{2.1cm} p{2.5cm} p{1cm}}
    \hline
    b    & Asymptotic $\bar{F}(b) \sum_n a_n^\alpha$ & Estimate for $\Pr\{S>b\}$&  Standard Error    & CV\\ \hline
    200 & 1.19 $\times 10^{-9}$ & 1.49$\times 10^{-9}$ & 1.61 $\times 10^{-11}$ &1.08\\ 
    500 & 3.05 $\times 10^{-11}$ & 3.32$\times 10^{-11}$ & 1.54 $\times 10^{-13}$& 0.47\\
  1000 & 1.91 $\times 10^{-12}$ & 1.97$\times 10^{-12}$ & 8.43 $\times 10^{-15}$ &0.42\\\hline
  \end{tabular}
  \label{TAB-NUM-RES-CHAP5}
\end{table}

\bibliography{Dissertation}
\bibliographystyle{abbrv}

\section*{Appendix}
\noindent
We present proof of Proposition \ref{PROP-RES-PROB} here in the
appendix.  To accomplish this 
we need Lemma \ref{Rthx} first, which is stated and proved below.

\begin{lemma}
  For any pair of sequences $\{x_n\},\{ \fin \}$ satisfying $x_n
  \rightarrow \infty$ and $\phi _n x_n \rightarrow \infty,$ the integral,
  \begin{align*}
    \int_{-\infty}^{x_n} e^{\fin x}F(dx) \leq 1 + c \fin^\kappa + e^{2
      \alpha }\bar{F}\left( \frac{2 \alpha}{\fin} \right) + e^{\fin
      x_n} \bar{F}(x_n) (1+ o(1)),
  \end{align*}
  as $n \rightarrow \infty,$ for any $1 < \kappa < \alpha \wedge 2,$ and
  some constant $c$ which does not depend on $n$ and $b.$
  \label{Rthx}
\end{lemma}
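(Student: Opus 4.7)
The plan is to exploit the mean-zero hypothesis via a Taylor decomposition $e^{\fin x} = 1 + \fin x + R(\fin x)$ with $R(y) := e^y - 1 - y \geq 0$, split the resulting integral at the natural thresholds $x = 0$ and $x = 2\alpha/\fin$, and on the tail piece use integration by parts combined with Potter's bounds to extract the leading term $e^{\fin x_n}\bar{F}(x_n)(1+o(1))$. Writing
\begin{align*}
\int_{-\infty}^{x_n} e^{\fin x} F(dx) = F(x_n) + \fin \int_{-\infty}^{x_n} x F(dx) + \int_{-\infty}^{x_n} R(\fin x) F(dx),
\end{align*}
I note that $\E X = 0$ forces the linear term to equal $-\fin \int_{x_n}^\infty x F(dx) \leq 0$, and $F(x_n) \leq 1$, so the problem reduces to upper bounding $\int_{-\infty}^{x_n} R(\fin x) F(dx)$.

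I split this remainder into $A_1$ over $(-\infty,0]$, $A_2$ over $[0, 2\alpha/\fin]$, and $A_3$ over $[2\alpha/\fin, x_n]$. On $(-\infty,0]$, Taylor's theorem gives $R(y) \leq y^2/2$ (since $R''(y) = e^y \leq 1$ there), so $A_1 \leq \fin^2 \E[X^2]/2$; this is finite by $\alpha > 2$ and bounded above by a constant multiple of $\fin^\kappa$ for $\fin$ in any bounded range. On $[0, 2\alpha/\fin]$, the elementary estimate $R(y) \leq C_\alpha y^\kappa$ on $y \in [0,2\alpha]$ (which follows from $R(y) \leq e^{2\alpha} y^2/2$ together with boundedness of $y^{2-\kappa}$ on this range), combined with $\E|X|^\kappa < \infty$ (since $\kappa < \alpha$), yields $A_2 \leq c_2 \fin^\kappa$.

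The delicate piece is $A_3$. Using $R(y) \leq e^y$ and integrating by parts against $d\bar{F}$,
\begin{align*}
A_3 \leq \int_{2\alpha/\fin}^{x_n} e^{\fin x} F(dx) = e^{2\alpha}\bar{F}(2\alpha/\fin) - e^{\fin x_n}\bar{F}(x_n) + \fin \int_{2\alpha/\fin}^{x_n} e^{\fin x} \bar{F}(x) dx.
\end{align*}
The substitution $u = \fin(x_n - x)$ recasts the last integral as $e^{\fin x_n} \int_0^{\fin x_n - 2\alpha} e^{-u} \bar{F}(x_n - u/\fin) du$, and the main obstacle is to establish the asymptotic $\sim e^{\fin x_n}\bar{F}(x_n)$, which, together with the cancellation from the middle term above, is what leaves only $e^{\fin x_n}\bar{F}(x_n)(1+o(1))$ in the final bound. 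Potter's bounds \eqref{LONG-TAIL-EXT} supply both the pointwise convergence $\bar{F}(x_n - u/\fin)/\bar{F}(x_n) \to 1$ for each fixed $u$ (since $\fin x_n \to \infty$ forces $x_n - u/\fin \sim x_n$) and the domination $\bar{F}(x_n - u/\fin)/\bar{F}(x_n) \leq (1+\delta)(1 - u/(\fin x_n))^{-\alpha-\delta}$; the latter's singularity as $u \uparrow \fin x_n$ is tamed by $e^{-u}$ being exponentially small there, which furnishes the uniform integrable dominant needed for dominated convergence. Assembling $A_1 + A_2 + A_3$ together with the initial ``$1$'' from $F(x_n)$ produces the claimed bound.
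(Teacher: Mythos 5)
Your approach is essentially the paper's: Taylor-expand the exponential to isolate the mean-zero linear term and a $\kappa$-power remainder on the left, and on the right use integration by parts, the substitution $u=\fin(x_n-x)$, Potter's bounds, and dominated convergence. Your split point $2\alpha/\fin$ for $A_3$ coincides exactly with the paper's choice $\gamma/\fin$ with $\gamma=2\alpha$ (the paper reveals this choice only at the end, when it needs $e^{-u}h(u)\leq e^{-u/4}$ to be integrable); and your observation that the $-e^{\fin x_n}\bar F(x_n)$ boundary term from integration by parts cancels against the asymptotic of the integral is a small strengthening over the paper, which simply discards the negative boundary term.

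There is one genuine gap, in the bound for $A_1$. You use $R(y)\leq y^2/2$ on $y\leq0$ to get $A_1\leq \fin^2\E[X^2]/2$, and then observe this is $\leq c\fin^\kappa$ ``for $\fin$ in any bounded range.'' But the hypotheses say only $x_n\to\infty$ and $\fin x_n\to\infty$; nothing forces $\fin$ to be bounded, and the constant $c$ in the statement must be uniform. Since $\kappa<2$, a $\fin^2$ bound cannot be converted into a $\fin^\kappa$ bound without an a priori bound on $\fin$. The fix is to bound $R$ directly by the $\kappa$th power on the negative half-line: for $y\leq 0$ one has both $R(y)\leq y^2/2$ and $R(y)\leq |y|$, and since $1<\kappa<2$ the function $R(-y)/y^\kappa$ vanishes as $y\to0^+$ (order $y^{2-\kappa}$) and as $y\to\infty$ (order $y^{1-\kappa}$), hence is bounded; so $R(y)\leq C_\kappa|y|^\kappa$ for all $y\leq0$ and $A_1\leq C_\kappa\fin^\kappa\E|X|^\kappa$, uniformly in $\fin$. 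This is, in effect, the negative-argument case of the single inequality $e^x\leq 1+x+|x|^\kappa e^y$ that the paper applies once over all of $(-\infty,\gamma/\fin]$, avoiding your extra split at $0$. Everything else in your argument is sound, though you might spell out the explicit integrable majorant $(1+\delta)e^{-u}(1-u/(\fin x_n))^{-\alpha-\delta}\leq C e^{-cu}$ on $[0,\fin x_n-2\alpha]$ as the paper does via the elementary estimate $\log h(u)\leq 3\alpha u/(2\gamma)$.
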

\begin{proof}
  We split the region of integration into $(-\infty, \gn]$ and $(\gn,
  x_n]$ for some constant $\gamma > 0$; the partition is such that the
  integrand stays bounded in the former region.\\
  Let $I_1 := \int_{-\infty}^{\gn} e^{\fin x} F(dx)$ and $I_2 :=
  \int_{\gn}^{x_n} e^{\fin x} F(dx).$\\
  For any $\kappa \in (1,2]$ and $y > 0,$ it is easily verified that
  \[ e^x \leq 1 + x + |x|^\kappa e^y, \quad x \in (-\infty,y].\]
  Therefore,
\begin{align}
  I_1 &\leq \int_{-\infty}^{\gn}\left(1 + \fin x + \fin^\kappa
    |x|^\kappa \exp(\fin \cdot \gn) \right) F(dx)
  \nonumber\\
  &\leq \int_{-\infty}^{\gn}F(dx) + \fin\int_{-\infty}^{\gn}xF(dx) +
  \fin ^\kappa \ega\int_{-\infty}^{\gn} |x|^\kappa F(dx)
  \nonumber\\
  &\leq \int_{-\infty}^{\infty}F(dx) + \fin
  \int_{-\infty}^{\infty}xF(dx) + \fin ^\kappa \ega
  \int_{-\infty}^{\infty} |x|^\kappa F(dx) \nonumber\\
  &= 1 + c \fin ^\kappa, \label{int-I1}
\end{align}
where $c := \ega \int_{-\infty}^{\infty} |x|^\kappa F(dx) < \infty$
because $\E |X|^\kappa < \infty;$ this follows because
$\kappa <
\alpha.$ 
We have also used $\E X = 0$ to arrive at \eqref{int-I1}.  Integrating
by parts for the second integral $I_2:$
\begin{align}
  I_2 &= -\int_{\gn}^{x_n} e^{\fin x} \bar{F}(dx) \nonumber\\
  &= e^{\fin \gn} \bar{F}\left(\frac{\gamma}{\phi_n}\right) - e^{\fin
    x_n}\bar{F}(x_n) + \fin
  \int_{\gn}^{x_n} e^{\fin x} \bar{F}(x)dx \nonumber\\
  &\leq \ega \bar{F}\left(\frac{\gamma}{\phi_n} \right) + I_2',
\label{int-I2}
\end{align}
where, $I_2' := \fin \int_{\gn}^{x_n} e^{\fin x}
\bar{F}(x) dx.$ Now the change of variable $u = \fin (x_n - x)$  results in:
\begin{align}
  I_2' &= e^{\fin x_n} \int_0^{\fin x_n - \gamma} e^{-u}\bar{F} \left(x_n -
    \frac{u}{\fin } \right) du \nonumber\\
  &= e^{\fin x_n} \bar{F}(x_n) \int_0^{\fin x_n - \gamma} e^{-u} g_n(u)
  du,
\label{int-I2'}
\end{align}
where,
\begin{equation*}
  g_n(u) := \frac{\bar{F} \left(x_n - \frac{u}{\fin }
    \right)}{\bar{F}(x_n)} = \frac{\bar{F} \left(x_n \left(1-
        \frac{u}{\fin x_n} \right) \right)}{\bar{F}(x_n)}.
\end{equation*}
Since $L(\cdot)$ is slowly varying and $\fin x_n \rightarrow \infty,$
given any $\delta > 0,$ it follows from \eqref{LONG-TAIL-EXT} that, 
\begin{equation*}
(1- \delta) \left(1-\frac{u}{\fin x_n} \right) ^{-\alpha + \delta}
\leq g_n(u) \leq (1 + \delta )\left(1-\frac{u}{\fin x_n} \right) ^{-\alpha - \delta}.
\end{equation*}
for all $n$ large enough. So for any fixed $u,$ we have $g_n(u)
\rightarrow 1$ as $n \rightarrow \infty.$ Now fix $\delta =
\frac{\alpha}{2}.$ Then for $n$ large enough,
\begin{equation}
  g_n(u) \leq \left( 1 + \frac{\alpha}{2} \right) \left( 1
    -\frac{u}{\fin x_n }  \right) ^{-\frac{3\alpha }{2}}.
\label{gn-bound}
\end{equation}
Let $h(u) = \left( 1 -u/\fin x_n \right) ^{-\frac{3\alpha
  }{2}}$. Since $\log h(0) = 0$ and $\frac{d}{du}\left( \log(h(u)
\right) \leq \frac{3 \alpha}{2 \gamma }$ for $0 \leq u \leq \fin x_n -
\gamma ,$ we have $h(u) \leq \exp({3 \alpha u}/{2 \gamma })$ on the
same interval. Therefore if we choose $\gamma = 2 \alpha,$ the
integrand in $I_2'$ is bounded for large enough $n$ by an integrable
function as below:
\begin{align*}
  \left| e^{-u}g_n(u)\mathbf{1}(0 \leq u \leq \fin x_n - \gamma)
  \right| &\leq \left| e^{-u} \left( 1 + \frac{\alpha }{2} \right)
    h(u)\mathbf{1}(0 \leq u \leq \fin x_n - \gamma) \right|\\
  &\leq \left( 1 + \frac{\alpha }{2} \right) e^{-u + \frac{3 \alpha
      u}{2 \gamma }} = \left( 1 + \frac{\alpha }{2} \right) e
  ^{-\frac{u}{4}}.
\end{align*}
Applying dominated convergence theorem, we get
\begin{equation*}
\int_0^{\fin x_n -
  \gamma} e^{-u} g_n(u)du \sim 1 \text{ as } n \rightarrow \infty.
\end{equation*}
Since $\int_{-\infty}^{x_n} e^{\fin x}F(dx) = I_1 + I_2,$ combining
this result with \eqref{int-I1}, \eqref{int-I2} and \eqref{int-I2'},
completes the proof.
\end{proof}

\begin{proof}[Proof of Proposition \ref{PROP-RES-PROB}]
  Observe that for any $n$ and $j,$
  \[ \left\{ M_n^{(-j)} \leq \frac{b}{k}\right\} = \bigcap_{i=1, i
    \neq j}^n \left\{ X_i \leq \frac{b}{k a_i}\right\}.\] Then for any
  $\theta > 0,$
  \begin{align*}
    \Pr \left\{ S_n^{(-j)} > b, M_n^{(-j)} \leq \frac{b}{k}\right\}
    \leq \exp(-\theta b) \prod_{i=1,i \neq j}^n \E \left[ \exp(\theta
      a_i X_i); X_i \leq \frac{b}{ka_i} \right]
  \end{align*}
  because of a simple application of Markov's inequality. If $\theta$
  is chosen such that $\theta b \rightarrow \infty$ as $b \rightarrow
  \infty,$ from Lemma \ref{Rthx}, we have
  \begin{align*}
  \E \left[ \exp(\theta a_i X_i); X_i \leq \frac{b}{ka_i} \right] \leq
  1 + c \theta^2 a_i^2 + e^{2 \alpha }\bar{F}\left( \frac{2
      \alpha}{\theta a_i} \right) + \exp \left(\theta \frac{b}{k}
  \right) \bar{F}\left( \frac{b}{ka_i}\right) (1+ o(1)),
\end{align*}
uniformly in $i,$ as $b \rightarrow \infty.$ Since $1 + x \leq \exp(x),$
\begin{align}
  \Pr & \left\{ S_n^{(-j)} > b, M_n^{(-j)} \leq \frac{b}{k}\right\}
  \nonumber\\
  &\quad \leq \exp(-\theta b) \prod_{i=1,i \neq j}^n \exp \left( c
    \theta^2 a_i^2 + e^{2 \alpha }\bar{F}\left( \frac{2 \alpha}{\theta
        a_i} \right) + \exp \left(\theta \frac{b}{k} \right)
    \bar{F}\left( \frac{b}{ka_i}\right) (1+ o(1))  \right) \nonumber\\
  & \quad \leq \exp \left( -\theta b + c \theta^2 \sum_i a_i^2 + e^{2
      \alpha } \sum_i \bar{F}\left( \frac{2 \alpha}{\theta a_i}
    \right) + \bar{F}\left( \frac{b}{k}\right) \exp \left(\theta
      \frac{b}{k} \right) \sum_i a_i^{\alpha-\epsilon} (1+o(1))
  \right), \label{INTER-THETA}
\end{align}
for any given $\epsilon > 0,$ due to \eqref{LONG-TAIL-EXT}, uniformly
in $j$ and $n,$ as $b \rightarrow \infty.$ Observe that
\[ \theta_b := -\frac{k}{b}\log \left( \frac{\sum_i a_i^\alpha}{k}
  \bar{F} \left( \frac{b}{k}\right)\right)\] is the minimizer of
$-\theta b + \sum_i a_i^\alpha \bar{F}(b/k) \exp(\theta b/k),$ and it
approximately minimizes the right hand side of
\eqref{INTER-THETA}. Since $\theta_b \searrow 0$ and $\sum_i
a_i^{\alpha-\epsilon} < \infty$ for small enough $\epsilon,$ it
follows from \eqref{LONG-TAIL-EXT} that
\begin{align*}
  \sum_i \bar{F} \left( \frac{2\alpha}{\theta_b a_i}\right) \leq
  (1+\epsilon)\sum_i
  \left(\frac{a_i}{2\alpha}\right)^{\alpha-\epsilon} \bar{F}\left(
    \frac{1}{\theta_b}\right)  = o( \theta_b),\\
  \theta_b^2 = o \left( \theta_b \right), \text{ and } \bar{F}\left(
    \frac{b}{k}\right) \exp \left(\theta_b \frac{b}{k} \right) \sum_i
  a_i^\alpha = k,
\end{align*}
as $b \rightarrow \infty.$ Therefore, uniformly for every $n$ and $j \leq
n,$
\begin{align*}
  \Pr \left\{ S_n^{(-j)} > b, M_n^{(-j)} \leq \frac{b}{k}\right\}
  &\leq \exp \left( k\log \left( \frac{\sum_i a_i^\alpha}{k} \bar{F}
      \left( \frac{b}{k}\right)\right) + o(1) + k (1+o(1)) \right)\\
  &= \exp(k+o(1)) \left( \frac{\sum_i a_i^\alpha}{k}
    \bar{F}\left(\frac{b}{k}\right)\right)^k,
\end{align*}
as $b \rightarrow \infty.$ This proves the claim.
\end{proof}

\end{document}